\newtheorem{prop}{Proposition}[section]
\newtheorem{rmk}{Remark}[section]
\newcommand\dd{\mathrm{d}}
\newcommand\pp{\partial}
\newcommand\uvec{\bm{u}}
\newcommand\X{\mathbf{X}}
\newcommand\x{\bm{x}}
\newcommand{\fO}{\mathcal{O}}
\newcommand{\fS}{\mathcal{S}}
\newcommand{\rvu}{\bm{u}}
\newcommand{\vu}{\bm{u}}
\newcommand{\vs}{\bm{s}}
\newcommand{\sR}{\mathbb{R}}
\begin{document}

\title{On pattern formation in the thermodynamically-consistent variational Gray-Scott model}

\author[1]{Wenrui Hao}\ead{wxh64@psu.edu}
\affiliation[1]{Department of Mathematics, Pennsylvania State University, University Park, 16802, PA, United States}

\author[2]{Chun Liu}\ead{cliu124@iit.edu}
\affiliation[2]{Department of Applied Mathematics, Illinois Institute of Technology, Chicago, IL 60616, United States}

\author[3]{Yiwei Wang}\ead{yiweiw@ucr.edu}
\affiliation[3]{Department of Mathematics, University of California, Riverside, Riverside, CA 92521, United States}

\author[1]{Yahong Yang}\ead{yxy5498@psu.edu}

\date{}

\begin{abstract}
  In this paper, we explore pattern formation in a four-species variational Gary-Scott model, which includes all reverse reactions and introduces a virtual species to describe the birth-death process in the classical Gray-Scott model. This modification transforms the classical Gray-Scott model into a thermodynamically consistent closed system. The classical two-species Gray-Scott model can be viewed as a subsystem of the variational model in the limiting case when the small parameter $\epsilon$, related to the reaction rate of the reverse reactions, approaches zero. 
  We numerically explore pattern formation in this physically more complete Gray-Scott model in one spatial dimension, using non-uniform steady states of the classical model as initial conditions. By decreasing $\epsilon$, we observed that the stationary pattern in the classical Gray-Scott model can be stabilized as the transient state in the variational model for a significantly small $\epsilon$. 
  Additionally, the variational model admits oscillating and traveling-wave-like pattern for small $\epsilon$. The persistent time of these patterns is on the order of $\fO(\epsilon^{-1})$.
  We also analyze the energy stability of two uniform steady states in the variational Gary-Scott model for fixed $\epsilon$. Although both states are stable in a certain sense, the gradient flow type dynamics of the variational model exhibit a selection effect based on the initial conditions, with pattern formation occurring only if the initial condition does not converge to the boundary steady state, which corresponds to the trivial uniform steady state in the classical Gray-Scott model.
\end{abstract}

\maketitle

\section{Introduction}

Pattern formation is an emerging phenomenon observed across various disciplines, including biology, chemistry, physics, and engineering. Examples range from the stripes on a zebra to the spirals of galaxies \cite{murray2003spatial}. Understanding the mechanisms and principles behind pattern formation has been a longstanding scientific challenge, as it plays a fundamental role in the organization and functionality of natural systems. In biology, pattern formation underpins key processes such as embryonic development, tissue morphogenesis, and cellular differentiation, which are essential for the proper structure and function of living organisms. Mathematical models play a crucial role in understanding pattern formation, offering a powerful tool to unravel its intricate dynamics, uncover fundamental principles, and predict complex behaviors \cite{maini2001mathematical, turing1952chemical, umulis2015role}. During past decades, significant contributions have advanced our understanding of pattern formation, including the molecular underpinnings \cite{maini2006turing}, modeling dispersal in biological systems \cite{othmer1988models}, investigating dynamic pattern formation in cellular networks \cite{othmer1971instability}, and exploring multiscale models of developmental systems  \cite{othmer2006multiscale}. 

Reaction-diffusion equations, rooted in the seminal work of Alan Turing \cite{turing1952chemical}, have been one of the best-known mathematical models used to study pattern formation \cite{kondo2010reaction, meinhardt2000pattern, oster1983mechanical, wu2019synthetic}. 
Arising from Turing instability, reaction-diffusion models can generate a wide variety of spatial patterns. 
Recent advances in the mathematical study of pattern formation include the integration of computational techniques to compute multiple patterns \cite{hao2020spatial, wu2024solution,hao2024companion,zheng2024hompinns,hao2020homotopy,wang2018two}, as well as the application of machine learning methods to analyze complex pattern formation from data and discover novel patterns \cite{huang2022hompinns,zheng2024hompinns}.

One of the most well-known reaction-diffusion systems for pattern formation is the Gray-Scott model, which can generate a diverse range of spatial patterns \cite{pearson1993complex, wang2021bifurcation,chen2023multiple,doelman1997pattern}. Originally proposed by Gray and Scott \cite{gray1984autocatalytic, gray1984chemical}, this model consists of two reacting chemical species in a system of two ordinary differential equations. Later, mechanical effects, i.e., diffusion, was incorporated into the model, leading to reaction-diffusion equations capable of generating complex patterns \cite{pearson1993complex}.
The Gray-Scott model contributes to our understanding of pattern formation observed in various natural and artificial systems \cite{kondo2010reaction}, and has been widely studied during the past decades \cite{doelman1997pattern, hu2012moving, qi2017travelling}. These insights not only deepen our understanding of nonlinear dynamics and self-organization but also have practical implications in fields such as chemistry, materials science, and biology \cite{nakao2010collective}. The Gray-Scott model is a widely used prototype for studying pattern formation, providing a critical link between theoretical modeling and experimental observations across various disciplines \cite{li2017turing}. Similar models for pattern formation include the Belousov-Zhabotinsky reaction \cite{turner2009simple}, the Schnakenberg model \cite{schnakenberg1979simple}, the Gierer-Meinhardt model \cite{gierer1972theory}, and the Brusselator model \cite{gambino2013turing}, which also exhibit rich dynamics and are commonly used in mathematical biology and related fields.

The classical Gray-Scott model is built by coupling linear diffusion equations with the reaction kinetics of two irreversible reactions, described by the law of mass action. More precisely, the Gray-Scott model considers the following two irreversible reactions
\begin{equation}\label{KCR_GS_VS}
  \begin{aligned}
    & \ce{U + 2 V -> 3 V}, \quad \ce{V -> P}\ , 
   \end{aligned}
\end{equation}
where $V$ is an activator, $U$ is a substrate, and $P$ is an inert product. To maintain the system out of equilibrium, both $U$ and $V$ are removed by the feed process \cite{pearson1993complex}. Consequently, 
The reaction-diffusion equation of $u$ and $v$ are given by \cite{pearson1993complex}
    \begin{equation}
      \begin{cases}
       & \dfrac{\pp u}{\pp t} = D_u \Delta u - u v^2 + f(1 - u) \\
       & \\
       & \dfrac{\pp v}{\pp t} = D_v \Delta v + u v^2 - k v - f v \ , \\ 
       \end{cases}\label{classical}
    \end{equation}
subject to certain boundary and initial conditions.
Here, $u$ and $v$ denote the concentration of $U$ and $V$, the dimensionless reaction rate for the first reaction is set to be $1$, $k$ is the dimensionless rate constant of the second reaction, $f$ represents the dimensionless feed rate, and $D_u$ and $D_v$ are diffusion coefficients.  A surprising variety of spatiotemporal patterns can emerge from the reaction-diffusion equation (\ref{classical}) under specific parameter values and initial conditions \cite{pearson1993complex}. 

From a modeling perspective, the classical Gray-Scott model is not thermodynamically consistent, meaning it may not satisfy the first and second laws of thermodynamics. In biological systems, where energy input, dissipation, and conversion govern self-organization, thermodynamic consistency is crucial for developing mechanistically accurate and predictive models that faithfully capture the underlying physical principles. As a result, the thermodynamic basis for pattern formation in the Gray-Scott model is unclear. For instance, it remains uncertain whether these patterns represent transient phenomena or non-equilibrium steady states, what is the energetic cost of the complex pattern in the models.

In a recent work \cite{liang2022reversible}, variational, reversible Gray-Scott models are developed based on an energetic variational approach (EnVarA) \cite{giga2017variational, wang2022some}. The model revises the classical Gray-Scott model into a thermodynamically consistent form by including all reverse reactions in the model. Additionally, a virtual species is introduced to account for the influx and efflux of $u$ and $v$, effectively transforming the open system into a subsystem of a larger, closed system.
It is proved in \cite{liang2022reversible} that in the short-time dynamic process, when $\epsilon$, the small constant related to reaction rates in the reverse part, goes to zero, the solution to the variational Gray-Scott models will converge to that of the classical Gray-Scott models. Additionally, numerical simulations in a recent study \cite{fu2023high} demonstrate pattern formation can occur in the variational model.

The variational Gray-Scott model not only restores thermodynamic consistency of the classical model but also provides a more realistic framework for studying biological and chemical pattern formation. Many natural processes, such as biochemical reactions and cellular transport mechanisms, involve reversible dynamics that adhere to thermodynamic laws. By capturing these aspects, the variational model allows for a deeper understanding of pattern formation in systems where classical models fall short. Moreover, the techniques and insights developed through this study can be extended to other reaction-diffusion systems, offering a roadmap for analyzing complex patterns in fields ranging from developmental biology to materials science.

For the classical Gray-Scott model, many nontrivial steady states on the domain $\Omega = (0,1)$ with no-flux boundary conditions have been computed in \cite{hao2020spatial}. As a first step in exploring pattern formation in the thermodynamically consistent variational Gray-Scott model for various values of $\epsilon$, we use these steady states as initial conditions, expecting they may serve as suitable starting points for observing potential pattern formation in the variational model.
Numerical experiments indicate that for relatively large values of $\epsilon$, all initial conditions quickly converge to a uniform steady state. However, for sufficiently small $\epsilon$, the variational model exhibits rich transient dynamics: stationary patterns may persist for long periods, and both oscillatory and traveling-wave-like behaviors can emerge. We also analyze the stability of two uniform steady states in the variational Gray-Scott model for fixed $\epsilon$ to provide some theoretical insights to the simulation results. Although both uniform states are stable in a certain
sense, pattern formation can only occur if the initial condition does not converge to the boundary steady state, in which $(u, v) = (1, 0)$.

The rest of the paper is organized as follows. In Section 2, we derive the variational Gray-Scott model using the energetic variational approach. Section 3 is devoted to analyzing the energy stability of two uniform steady states. In Section 4, we investigate pattern formation in one dimension for different values of $\epsilon$, using the steady states from \cite{hao2020spatial} as initial conditions. Finally, in Section 5, we examine how the persistence time of patterns depends on the parameter $\epsilon$. A conclusion remark is given in Section 6.

\section{Variational Gray-Scott Model}

In this section, we briefly review the derivation of the thermodynamically consistent variational Gray-Scott model, proposed in \cite{liang2022reversible}, by an energetic variational approach \cite{giga2017variational, wang2022some}.

Motivated by non-equilibrium thermodynamics, particularly the celebrated works of Lord Rayleigh \cite{strutt1871some} and Onsager \cite{onsager1931reciprocal, onsager1931reciprocal2},
the EnVarA has been a powerful tool of building thermodynamically consistent models in physics, chemical engineering, and biology \cite{giga2017variational, wang2020field}. The key idea of EnVarA is to describe an isothermal and mechanically isolated system by its energy and the rate of energy dissipation over time, along with kinematic (transport) assumptions on the employed variables. More precisely, according to the first and second laws of thermodynamics \cite{giga2017variational, ericksen1992introduction}, an isothermal and closed system possesses an energy-dissipation law
\begin{equation}
\frac{\dd}{\dd t} E^{\text{total}}(t) = - \triangle(t) \leq 0 \ .
\end{equation}
Here, $E^{\text{total}}$ is the total energy, which is the sum of the Helmholtz free energy $\mathcal{F}$ and the kinetic energy $\mathcal{K}$; $\triangle (t) \geq 0$ stands for the rate of energy dissipation, which equals to the rate of entropy production in this case. %The idea of EnVarA is to model all the chemo-mechanical coupling in a complicated system via its energy-dissipation law, 
Once these quantities are specified, a thermodynamically consistent model can be derived by combining the Least Action Principle (LAP) and the Maximum Dissipation Principle (MDP) \cite{giga2017variational}.  More specifically, for the energy part, one can employ the LAP, taking variation of the action functional $\mathcal{A}(\x) = \int_0^T \left( \mathcal{K} - \mathcal{F} \right) \dd t$ with respect to $\x$ (the trajectory in Lagrangian coordinates) \cite{giga2017variational, arnol2013mathematical},  to derive the conservative force, i.e.,
$\delta \mathcal{A} =  \int_{0}^T \int_{\Omega} ({\rm force}_{\text{iner}} - {\rm force}_{\text{conv}})\cdot \delta \x  ~\dd \x \dd t.$ For the dissipation part, one can apply the MDP, taking the variation of the Onsager dissipation functional $\mathcal{D}$ with respect to the ``rate'' $\x_t$, to derive the dissipative force, i.e., $\delta \mathcal{D}  = \int_{\Omega} {\rm force}_{\text{diss}} \cdot \delta \x_t~ \dd \x$, where the dissipation functional $\mathcal{D} = \frac{1}{2} \triangle$ in the linear response regime \cite{onsager1931reciprocal}. Consequently, the force balance condition 
results in
\begin{equation}\label{FB}
\frac{\delta \mathcal{A}}{\delta \x} = \frac{\delta \mathcal{D}}{\delta \x_t},
\end{equation}
which is the dynamics of the system. In the case that $\mathcal{K} = 0$, $\frac{\delta \mathcal{A}}{\delta \x} = - \frac{\delta \mathcal{F}}{\delta \x}$, then the dynamics can be written as $$\frac{\delta \mathcal{D}}{\delta \x_t} = - \frac{\delta \mathcal{F}}{\delta \x},$$ which is a generalized gradient flow. For these systems, the free energy determines the equilibrium of the system and the rate of energy dissipation determines the dynamics.

The EnVarA is originally developed for mechanical systems,  and $\x$ should be understood as the flow map. Recent work \cite{wang2020field} extends this variational principle to reaction kinetics by computing the variation with respect to a reaction trajectory  $R$, analogous to the flow map, and its time derivative $\pp_t R$, representing the reaction rate, to derive reaction kinetics. One key advantage of using EnVarA to model complex chemo-mechanical systems in biology is its ability to provide a unified framework for both mechanics and chemistry. All multiscale energetic couplings and competing effects are naturally incorporated through the choice of the energy-dissipation law.

\subsection{Derivation of the Variational Gray-Scott Models}

Since we are only interested in the concentration of the substrate $U$ and the activator $V$, we can combine the removal process $V$ and the process of generating $P$, an inert product, as one process for mathematical simplicity. Therefore, the chemical reactions described in the classical Gray-Scott model (\ref{classical}) can be represented as follows:
\begin{equation}\label{KCR_GS_i}
  \begin{aligned}
    & \ce{U + 2 V ->[ 1] 3 V}, \quad \ce{V ->[k+f] P}, \quad \ce{U ->[f] $\emptyset$}, \quad \ce{$\emptyset$ ->[f] U}
   \end{aligned}
\end{equation} 
In the reaction network (\ref{KCR_GS_i}), all chemical reactions are irreversible. In addition, it involves the birth and death of $U$, making the overall system an open system.

To formulate a thermodynamically consistent variational Gray-Scott model, we consider the following reversible chemical reactions:
\begin{equation}\label{KCR_GS_Re}
  \ce{U + 2 V <=>[1][\epsilon_1] 3V}, \quad \ce{V <=>[k$+$f][\epsilon_2] P}, \quad \ce{U <=>[f][\epsilon_3] Y}\ ,
\end{equation}
where $\epsilon_i$ are small parameters, $Y$ is a virtual species added to model the birth-death process of $U$. Clearly, if $\epsilon_1 \rightarrow 0$ and $\epsilon_2 \rightarrow 0$, the first two reactions revert to those in (\ref{KCR_GS_i}). % For the birth-death part, we introduce a virtual species
The introduction of the virtual species $Y$, whose concentration is large (of the order $O(1/\epsilon_3)$), allows us to treat the open system, situated in a sustained environment with influx and efflux, as a subsystem in a larger, closed ``universe'' \cite{ge2013dissipation}. 
A similar approach is used in \cite{falasco2018information}.
In general, the small parameters $\epsilon_i$ are different and may not be in the same order.  % Both $\epslion_1$ and $\epsilon_2$ are simply reaction rate, while $\epsilon_3$ also
In particular, $\epsilon_3$ not only represents the reaction rate of the reverse reaction but also describes the concentration scale of the virtual species $Y$.
In the current study, we assume $\epsilon_1 = \epsilon_2 = \epsilon_3  = \epsilon$. We will explore more general cases in future work.

% For the modified reaction network (\ref{KCR_GS_Re}), a reaction-diffusion model, referred to as the variational Gray-Scott model, can be derived using the EnVarA framework.  
% Within a prescribed energy-dissipation law, the EnVarA derives the governing equations of the systems by combining two variational principles, the least action principle and the maximum dissipation principle, using the force balance relation \cite{wang2022some}.

We denote the concentrations of species $P$ and $Y$ by $p$ and $y/\epsilon$ respectively. 
To derive the variational Gray-Scott model, we first notice that the concentrations satisfy the following kinematics
  \begin{equation}
    \begin{cases}
      & u_t + \nabla \cdot (u \rvu_u) = - \pp_t R_1 - \pp_t R_3 \\
      & v_t + \nabla \cdot (v \rvu_v) =  \pp_t R_1 - \pp_t R_2 \\
      & p_t = \pp_t R_2 \\
      & y_t = \epsilon \pp_t R_3, \\
    \end{cases}\label{law}
  \end{equation}
where $R_i$ is the reaction trajectory of each reaction in (\ref{KCR_GS_Re}), and $\rvu_u$ and $\rvu_v$ are effective velocity induced by the diffusion process. The reaction trajectories $R_i(t)$ account for the number of forward reaction has happened for $i$-th reaction by time $t$ and may take negative values \cite{wang2020field}.
Here, we disregard the diffusion effects on $P$ and $Y$, as the slow diffusion of P and Y minimally impacts the dynamics of U and V, which are our main focus.  %We impose the non-flux boundary condition on $\partial \Omega$ 
The boundary condition for all species is the non-flux  boundary condition, given by
\begin{equation}\label{BC_no_flux}
  \frac{\partial u}{\partial \mathbf{\nu}} = \frac{\partial v}{\partial \mathbf{\nu}} =  \frac{\partial p}{\partial \mathbf{\nu}} =  \frac{\partial y}{\partial \mathbf{\nu}} = 0, \
\end{equation}
which ensures the boundary term vanishes in deriving the force balance equation by the EnVarA.
It is important to note that
\begin{equation}\label{conserved}
  \frac{\dd}{\dd t} \int_{\Omega} u + v + p + \frac{y}{\epsilon} ~ \dd x = 0 \ ,
\end{equation}
for the kinematics (\ref{law}) along with the non-flux boundary condition (\ref{BC_no_flux}). The conservation property (\ref{conserved}) plays an important role in studying the steady state of the variational Gray-Scott model.

Following in general framework of modeling reaction-dissipation systems \cite{wang2020field}, the reaction-diffusion system can be modeled through the energy-dissipation law
\begin{equation}\label{ED_GS}
 \frac{\dd}{\dd t} \mathcal{F}(u, v, p, y) = - \triangle_{\rm mech} - \triangle_{\rm chem}\ .
\end{equation}
Here, $\mathcal{F}(u, v, p, y)$ is the free energy of the system, $\triangle_{\rm mech}$ and $\triangle_{\rm cheme}$ are the rate of energy-dissipation due the mechanical (diffusion) and chemical (reaction) parts respectively. The free energy $\mathcal{F}(u, v, p, y)$ is taken as
\begin{equation}\label{free_energy_GS}
    \mathcal{F} = \int u (\ln u - 1) + u \sigma_u + v (\ln v - 1) + v \sigma_v + p (\ln p - 1) + p \sigma_p +  \widetilde{y} (\ln \widetilde{y} - 1) + \widetilde{y}\sigma_{\widetilde{y}} \dd x\ .
\end{equation}
Here, $\widetilde{y} = \frac{y}{\epsilon}$ represents the concentration of $Y$, $\sigma_{u}$, $\sigma_{v}$, $\sigma_{p}$, $\sigma_{\widetilde{y}}$  denote internal energy of each species, which together determine the equilibrium of the system.
Let $(u^s_{+}, v^s_{+}, p^{s}_{+}, \widetilde{y}^{s}_{+}) \in \mathbb{R}^4_{+}$ be a positive equilibrium of the chemical reaction system (\ref{KCR_GS_Re}),  then $\sigma_{u}$, $\sigma_{v}$, $\sigma_{p}$, $\sigma_{\widetilde{y}}$  satisfies
% satisfy the following equations:
\begin{equation}\label{sigma}
    \begin{cases}
      & \ln u^{s}_+ + \sigma_u = \ln v^{s}_+ + \sigma_v \\ 
      & \ln p^{s}_+ + \sigma_p = \ln v^{s}_+ + \sigma_v \\
      & \ln u^{s}_+ + \sigma_u = \ln \widetilde{y}^{s}_+ + \sigma_{\widetilde{y}}.
    \end{cases}
\end{equation}
Since $(u^s_{+}, v^s_{+}, p^{s}_{+}, \widetilde{y}^{s}_{+})$ satisfies
\begin{equation}
    u^s_{+} (v^s_{+})^2 = \epsilon (v^s_{+})^3, \quad (k+f) v_s^+ = \epsilon p^s_+, \quad f u_s^+ = \epsilon \widetilde{y}^{s}_+ \ ,
\end{equation}
we have
\begin{equation}\label{sigma_1}
  \sigma_{v} -  \sigma_u = \ln \epsilon, \quad \sigma_p  - \sigma_v = \ln \epsilon - \ln (k+f), \quad  \sigma_{\widetilde{y}} - \sigma_u = \ln \epsilon - \ln f \ .
\end{equation}
We can solve for $\sigma_i$ ($i = u, v, p, \widetilde{y}$) in terms of $k$, $f$ and $\epsilon$.
Notice in Eq.~\eqref{sigma_1}, that there are four variables but only three equations, resulting in an overparameterized case. 
The analysis for the differential internal energy $\sigma_i$ follows a similar pattern. In this paper, we take 
\begin{equation}
\sigma_u = 0, \quad \sigma_v = \ln \epsilon, \quad \sigma_p = 2 \ln \epsilon-\ln (k+f), \quad \sigma_{\widetilde{y}} = \ln \epsilon - \ln f.
\end{equation}
%It is worth mentioning that the choice of the free energy 
% This choice aligns with that of \cite{gao2022revisit}.

Next, we impose the rate of energy dissipation of the system, given by
\begin{equation}
  \triangle_{\rm mech} = \int  \int \frac{u}{D_v} |\rvu_u|^2 + \frac{v}{D_v} |\rvu_v|^2 \dd x
\end{equation}
and
\begin{equation}
\triangle_{\rm chem} = \int \pp_t R_1 \ln \left( \frac{\pp_t R_1}{\epsilon_1 v^3} + 1 \right) + \pp_t R_2 \ln \left( \frac{\pp_t R_2}{\epsilon p} + 1 \right) + \pp_t R_3 \ln \left( \frac{\pp_t R_3}{y} - 1 \right)  \dd x 
\end{equation}

To derive the dynamics from the energy-dissipation, we apply the EnVarA to the mechanical and chemical parts respectively, which leads to $\rvu_{\alpha}$ ($\alpha = u, v$) and $R_i$ ($i = 1, 2, 3$) such that the energy-dissipation law (\ref{ED_GS}) holds. In order to get the equation of $\rvu_u$, we first define the flow map $\x_u(\X, t)$ that is associated with $\rvu_u$ by the ODE
\[  \frac{\dd}{\dd t} \x_u(\X, t) = \uvec_u(\x_u(\X, t), t), \quad x_u(\X, 0) = \X\ . \]
If we consider only the diffusion of $u$, $u(\x, t)$ is determined by the flow map $\x_u(\X, t)$, making $\mathcal{F}$ as a functional of $\x_u(\X, t)$. Applying LAP and MDP with respect to $\x_u$ and $\rvu_u$ leads to the force balance equation (see \cite{liang2022reversible} for the detailed calculations) 
\begin{equation}\label{FB_1}
u \rvu_{u} = - D_u u \nabla \mu_u, \quad  \mu_u = \frac{\delta \mathcal{F}}{\delta u} = \ln u  + \sigma_{u}
\end{equation}
where $\mu_u$ is known as the chemical potential for the species $u$.
Similarly, we can derive that 
\begin{equation}\label{FB_2}
v \rvu_v = - D_v v \nabla \mu_v, \quad \quad  \mu_v = \frac{\delta \mathcal{F}}{\delta v} = \ln v  + \sigma_{v}
\end{equation}
For the chemical part, we need to perform the EnVarA with respect to $R_i$ and $\pp_t R_i$, which leads to
\begin{equation}\label{FB_3}
\ln \left( \frac{\pp_t R_1}{\epsilon v^3} + 1 \right) = - (\mu_v - \mu_u), \quad \ln \left( \frac{\pp_t R_2}{\epsilon p} + 1 \right) = -(\mu_p - \mu_v), \quad \ln \left( \frac{\pp_t R_3}{y} - 1 \right) = - (\mu_{\tilde{y}} - \mu_u)\
\end{equation}
Recall that $\sigma_{u}$, $\sigma_{v}$, $\sigma_{p}$, $\sigma_{\widetilde{y}}$  satisfies (\ref{sigma_1}), we can show $\pp_t R_i$ satisfies the law of mass action. Indeed, from (\ref{FB_3}), we know that
\begin{equation}
\pp_t R_1 = \epsilon v^3 \exp( - (\ln v  + \sigma_v - \ln u - \sigma_u ) ) - \epsilon v^3 = u v^2 - \epsilon v^3
\end{equation}
The calculations for $\pp_t R_2$ and $\pp_t R_3$ are similar.

Combing the force balance equations (\ref{FB_1}), (\ref{FB_2}) and  (\ref{FB_3}) with the kinematics (\ref{law}), we end up with the following reversible variational Gray-Scott model
\begin{equation}
  \begin{cases}
    & u_t = D_u \Delta u - ( u v^2 - \epsilon v^3) - f u + y \\
    & v_t = D_v \Delta v + ( u v^2 - \epsilon v^3) - ((k+f) v - \epsilon p) \\
    & p_t =   (k+f) v -  \epsilon p \\
    & y_t = \epsilon (f u - y). \\ 
  \end{cases} %x \in \Omega 
  \label{modGS}
\end{equation}

\begin{rmk}
Although the Gray-Scott model is not directly associated with any specific biological system, the same approach can be applied to restore thermodynamic consistency in more realistic models of pattern formation in various biological systems, as these models are typically of the reaction-diffusion type \cite{wu2019synthetic}. As another representative example, consider the celebrated Schnakenberg model \cite{maini1997spatial, schnakenberg1979simple}, given by
\begin{equation}
\begin{cases}
u_t = D_u \Delta u + k_2 a - k_1 u + k_3 u^2 v, \\
v_t = D_v \Delta v + k_4 b - k_3 u^2 v,
\end{cases}
\end{equation}
which models the sequence of reactions
\[
X \ce{<=>[k_1][k_2]} A, \quad 2X + Y \ce{->[k_3]} 3X, \quad B \ce{->[k_4]} Y,
\]
where $u$ and $v$ denote the concentrations of species $X$ and $Y$, respectively, and $a$ and $b$ are the concentrations of $A$ and $B$, assumed to be constant. A thermodynamically consistent version of the Schnakenberg model can be constructed by restoring the reversible parts of the latter two reactions with small reaction rate $\epsilon$ and formulating the full system in terms of $a$, $b$, $u$, and $v$. The resulting reaction-diffusion system can be derived from the energy-dissipation law
\begin{equation*}
\begin{aligned}
 & \frac{\dd}{\dd t}\int u (\ln u - 1) + u \sigma_u + v (\ln v - 1) + v \sigma_v + a (\ln a - 1) + a \sigma_a +  b (\ln b - 1) + b \sigma_b \dd x  \\
 & - \int \frac{u}{D_u} |{\bm u}_u| + \frac{v}{D_v} |{\bm u}_v| + \pp_t R_1 \ln \left( \frac{\pp_t R_1}{k_2 a} + 1 \right) + \pp_t R_2 \ln \left( \frac{\pp_t R_2}{\epsilon u^3} + 1 \right) + \pp_t R_3 \ln \left( \frac{\pp_t R_3}{\epsilon v} + 1 \right)
\end{aligned}
\end{equation*}
where $R_i$ are reaction trajectories for the three involved reactions, and ${\bm u}_{\alpha} (\alpha = u, v)$ are average velocities due to the diffusion of $u$ and $v$. The internal energies can be taken as $\sigma_a = 0$, $\sigma_u = \ln k_2 - \ln k_1$, $\sigma_v = \ln \epsilon + \ln k_2 - \ln k_1 - \ln k_3$ and $\sigma_b = 2 \ln \epsilon + \ln k_2 - \ln k_1 - \ln k_3 - \ln k_4$.
\end{rmk}

% It is worth noting that, 

\subsection{Formal limit of the Variational Gray-Scott Model}

In this subsection, we show the first two equations in the variational Gray-Scott model (\ref{modGS}) can be reduced to the classical Gray-Scott model (\ref{classical}) when $\epsilon \rightarrow 0$.

Assume the initial concentrations of U, V, P, and Y as $u_0$, $v_0$, $p_0$, and $y_0/\epsilon$, where $y_0=f$. When $\epsilon$ is small, we notice that $y(t)\approx y_0 = f$ is a nearly constant function since $y_t\approx 0$. Thus, as $\epsilon$ approaches zero, the first two equations in the variational Gray-Scott model formally converge to the classical Gray-Scott models, as we drop all terms with $\epsilon$ and replace $y$ by $f$ in the $u$, $v$ equation. The above argument can be made more rigorously \cite{liang2022reversible}. Since both the equations of $p$ and $y$ are linear,  we have
\begin{equation}
\begin{aligned}
 p = p_0 e^{-\epsilon t} + e^{-\epsilon t} (k+f) \int_{0}^t v(t) e^{\epsilon t} \dd t \hbox{~~and~~}
 y = y_0 e^{-\epsilon t} + \epsilon f e^{-\epsilon t} \int_{0}^t u(t) e^{\epsilon t} \dd t .
\end{aligned}\label{eq1}
\end{equation}
By plugging Eq. (\ref{eq1}) into Eq. (\ref{modGS}), we rewrite the equations of $u$ and $v$ as follows:
\begin{equation}
\begin{cases}
& u_t = D_u \Delta u - ( u v^2 - \epsilon v^3) - f u + y_0 e^{-\epsilon t} + \epsilon f e^{-\epsilon t} \int_{0}^t u(t) e^{\epsilon t} \dd t \\
& v_t = D_v \Delta v + ( u v^2 - \epsilon v^3) - (k+f) v + \epsilon p_0 e^{-\epsilon t} + \epsilon e^{-\epsilon t} (k+f) \int_{0}^t v(t) e^{\epsilon t} \dd t .\\
\end{cases}\label{Eq8}
\end{equation}
If $\max\{ \| u \|_{L^{\infty}}, \| v \|_{L^{\infty}}\}\le M$ for all $t \in [0,\infty)$, where $M$ is an order one constant with respect to $\epsilon$, then the following limits hold:
\[ \epsilon v^3,\epsilon f e^{-\epsilon t} \int_{0}^t u(t) e^{\epsilon t} \, dt, \epsilon y_0 e^{-\epsilon t}, \epsilon e^{-\epsilon t} (k+f) \int_{0}^t v(t) e^{\epsilon t} \, dt \to 0, \]
and $y_0 e^{-\epsilon t} \to y_0$ as $\epsilon \to 0$. Hence, formally, the variational Gray-Scott model (\ref{Eq8}) converges to the classical Gray-Scott model
\begin{equation}
\begin{cases}
& u_t = D_u \Delta u -  u v^2 - f u + f \\
& v_t = D_v \Delta v +  u v^2 - (k+f) v .\\
\end{cases}
\end{equation}
when $\epsilon \rightarrow 0$. We emphasize that it might be difficult to prove the assumption  $\max\{ \| u \|_{L^{\infty}}, \| v \|_{L^{\infty}}\}\le M$. So, the above limit is formal. However, the numerical simulations in the next section show this assumption holds for all tested initial conditions.

It is worth emphasizing that the formal convergence does not imply that the performance of the variational Gray-Scott models and the classical model is identical, as the system may exhibit singularity with respect to $\epsilon$. In the following sections, we will study the dynamical behavior of the variational Gray-Scott model for various $\epsilon$.

% \subsection{}\label{trivial SS}

\section{Uniform steady states in the Variational Gray-Scott Model and their energy stability}

Unlike the classical Gray-Scott model (when $\epsilon = 0$), which has only one uniform steady state $(u, v) = (1, 0)$, for a given $\epsilon > 0$, the variational system admits two uniform steady states. More precisely, for a given initial conditions $(u_0, v_0, p_0, y_0)$ and the domain $\Omega = (0, 1)$, we define the total mass of all species $C$ as 
\begin{equation}\label{Def_C}
    C =  \int_{\Omega} u_0 + v_0 + p_0 + \frac{1}{\epsilon} y_0 ~\dd x\ ,
\end{equation}
where $y_0 = f$. 
Assume that $(u_s, v_s, p_s, y_s)$ is spatially homogeneous steady-states of the variational Gray-Scott model (\ref{modGS}), then $(u_s, v_s, p_s, y_s)$ satisfies
\begin{equation}\label{cee}
    \begin{cases}
      & f u^{s} = y^{s} \\ 
      & u^{s} (v^s)^2 = \epsilon (v^{s})^3 \\
      &(k+f) v^s = \epsilon p^{s} \\
    \end{cases}
\end{equation}
subject to the constraint:
\begin{equation}\label{contraine}
    u^s + v^s + p^s + \frac{1}{\epsilon} y^s = C / |\Omega| = C\ ,
\end{equation}
Here, $|\Omega| = 1$ is the size of the domain. Solving (\ref{cee}) with the constraint (\ref{contraine}), we obtain two spatially homogeneous steady-states: one is a {\bf boundary steady-state} given by
\begin{equation}
\vu_1:=(u^s_1, v^s_1, p^s_1, y^s_1) := \left(\frac{\epsilon}{\epsilon+f} C,~ 0,~ 0, ~\frac{f\epsilon}{(\epsilon+f)} C\right),
\end{equation}
and the other is an {\bf interior steady-state} expressed as
\begin{equation}
\vu_2:=(u^s_2, v^s_2, p^s_2, y^s_2) := \left(\frac{\epsilon}{\lambda} C, ~ \frac{1}{\lambda} C, ~ \frac{(k+f)}{\epsilon\lambda} C, ~ \frac{f\epsilon}{\lambda(\epsilon)} C\right),
\end{equation}
where 
\[\lambda:=\epsilon+1+\frac{k+f}{\epsilon}+f.\] Here, the term interior means the steady state is in the interior of the stoichiometric compatibility
class \cite{anderson2015lyapunov}, defined by
\begin{equation*}
 \{ (u, v, p,  y) \in \mathbb{R}_+^4 ~|~  u = u_0 - R_1 - R_3, v = v_0 + R_1 - R_2, p = p_0 + R_2, y = y_0 + \epsilon R_3,  (R_1, R_2, R_3) \in \mathbb{R}^3 \}\ .
\end{equation*}
In contrast, the term boundary refers to a steady state that lies on the boundary of this compatibility class.

Notice that $\lambda=\mathcal{O}(\epsilon^{-1})$ and $C=\mathcal{O}(\epsilon^{-1})$, we can estimate the two steady states as follows:\begin{align}
    (u^s_1, v^s_1, p^s_1, y^s_1) =(\fO(1),0,0,\fO(1)),~(u^s_2, v^s_2, p^s_2, y^s_2) =(\fO(\epsilon),\fO(1),\fO(\epsilon^{-1}),\fO(\epsilon)).
\end{align}
The concentration of $Y$ is $\fO(\epsilon^{-1})$ in the boundary steady state and is $\fO(1)$ in the interior steady state. Since the initial condition of $Y$ is $\fO(\epsilon^{-1})$, it can be expected that a significant time is needed if the system would like to reach the interior steady state for small $\epsilon$.

\begin{rmk}
    Note that when \(\epsilon \to 0\), we have $$\lim_{\epsilon \to 0} \vu_1 = (1,0,0,f), \quad \lim_{\epsilon \to 0} \vu_2 = \left(0, \frac{f}{k+f}, \infty, 0\right)\ .$$
  Hence, the boundary steady state in the variational Gray-Scott model corresponds to the uniform steady state of the classical Gray-Scott model, while the interior steady state is not related to any steady state of the classical Gray-Scott model.
\end{rmk}

In this section, we analyze the energy stability of two uniform steady states with fixed $\epsilon$. Although the variational Gray-Scott model comprises four species - $U$, $V$, $P$, and $Y$, the conservation law constrains the system's degrees of freedom to three, corresponding to three reaction trajectories $R_i$. Specifically, based on Eqs.~(\ref{law}), the perturbation in the stability analysis satisfies
\[\vs=(- \alpha_1 - \alpha_3, \alpha_1 - \alpha_2, \alpha_2, \epsilon \alpha_3)\ ,\]where $\alpha_i\in \sR$ for $i=1,2,3$. Thus we define the perturbation manifold as follows: 
\begin{equation}
    \fS_1:=\operatorname{span}\{\vs_1,\vs_2,\vs_3\},~\vs_1:=(-1,1,0,0),~\vs_2:=(0,-1,1,0),~\vs_3:=(-1,0,0,\epsilon).
\end{equation}
where, $\vs_i$ correspond to the perturbation along the reaction trajectory $R_i$. We use $E$ denote the free energy (\ref{free_energy_GS}) without spatial integration throughout this section.
%\begin{equation}
%    \fS_1:=\operatorname{span}\{\vs_1,\vs_2,\vs_3\},~\vs_1:=(0,-1,1,0),\vs_2:=(-1,0,0,\epsilon), \vs_3:=(-1,1,0,0).
% \end{equation}

\subsection{Stability of the interior steady-state}
For the interior steady state $\vu_2$, we can prove that it is a local minimizer of the free energy (\ref{free_energy_GS}) based on the following proposition:
\begin{prop}\label{statwo}
For any $\epsilon>0$, the interior steady state
\begin{equation}
\vu_2=(u^s_2, v^s_2, p^s_2, y^s_2) = \left(\frac{\epsilon}{\lambda} C, ~ \frac{1}{\lambda} C, ~ \frac{k+f}{\epsilon\lambda} C, ~ \frac{f\epsilon}{\lambda} C\right)
\end{equation}
is a local minimizer of the energy $E$ on the manifold {$ {\vu}_0 + \fS_1$}, where ${\vu}_0 = (u_0, v_0, p_0, y_0/\epsilon)$ is the initial condition.
\end{prop}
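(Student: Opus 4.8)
The plan is to deduce the statement from strict convexity of the free energy together with the fact that $\vu_2$ is a detailed-balance equilibrium of the reaction network. The key observation is that, in the variables $(u,v,p,y)$, the integrand of $\mathcal{F}$ is a sum of one-variable functions of the form $z(\ln z-1)+z\sigma$, each strictly convex in its own concentration: the second derivatives of the $u$-, $v$- and $p$-terms are $1/u$, $1/v$, $1/p$, and the second $y$-derivative of the $\widetilde y=y/\epsilon$-term is $1/(\epsilon y)$, all positive. Hence the Hessian of the integrand in $(u,v,p,y)$ is the positive-definite diagonal matrix $\mathrm{diag}(1/u,\,1/v,\,1/p,\,1/(\epsilon y))$, so $\mathcal{F}$ is strictly convex on the positive cone, and so is its restriction to the affine manifold $\vu_0+\fS_1$ (the generators $\vs_1,\vs_2,\vs_3$ are linearly independent, so the restricted Hessian remains positive definite). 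Consequently $\mathcal{F}$ restricted to $\vu_0+\fS_1$ has at most one critical point, and any such point is automatically the unique global minimizer, hence in particular a strict local minimum. It therefore suffices to check (i) $\vu_2\in\vu_0+\fS_1$, and (ii) $\vu_2$ is a critical point of $\mathcal{F}$ on $\vu_0+\fS_1$.

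For (i) I would compute $\fS_1^{\perp}$: a vector $(a,b,c,d)$ is orthogonal to $\vs_1=(0,-1,1,0)$, $\vs_2=(-1,0,0,\epsilon)$ and $\vs_3=(-1,1,0,0)$ exactly when $b=c$, $a=b$ and $a=\epsilon d$, i.e. when it is proportional to $(1,1,1,1/\epsilon)$. Thus $\vu_0+\fS_1$ is precisely the hyperplane $\{\,u+v+p+y/\epsilon=C\,\}$, with $C$ the conserved total mass (\ref{Def_C}) fixed by the initial datum. Since $\vu_2$ was constructed in Section~\ref{trivial SS} to satisfy exactly the constraint (\ref{contraine}), we obtain $\vu_2-\vu_0\in\fS_1$, i.e. $\vu_2\in\vu_0+\fS_1$.

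For (ii), using $\nabla\mathcal{F}=(\mu_u,\mu_v,\mu_p,\tfrac{1}{\epsilon}\mu_{\widetilde y})$ with $\mu_\alpha=\ln\alpha+\sigma_\alpha$, pairing with the generators of $\fS_1$ gives directional derivatives equal to the chemical affinities of the three reactions of (\ref{KCR_GS_Re}):
\[
\partial_{\vs_1}\mathcal{F}=\int_\Omega(\mu_p-\mu_v)\,\dd x,\qquad
\partial_{\vs_2}\mathcal{F}=\int_\Omega(\mu_{\widetilde y}-\mu_u)\,\dd x,\qquad
\partial_{\vs_3}\mathcal{F}=\int_\Omega(\mu_v-\mu_u)\,\dd x,
\]
where the factor $\epsilon$ in $\vs_2$ is exactly cancelled by the factor $1/\epsilon$ in $\partial_y\mathcal{F}$. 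At the homogeneous state $\vu_2$ the algebraic identities (\ref{cee}) — equivalently $u^s_2=\epsilon v^s_2$, $p^s_2=\tfrac{k+f}{\epsilon}v^s_2$, $\widetilde y^s_2=\tfrac{f}{\epsilon}u^s_2$ — together with the chosen internal energies (\ref{sigma_1}) force $\mu_u=\mu_v=\mu_p=\mu_{\widetilde y}$, so all three affinities vanish. Hence $\vu_2$ is a critical point, and by the convexity step it is the unique, strict local minimum of $\mathcal{F}$ on $\vu_0+\fS_1$.

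I do not expect a serious obstacle: once strict convexity is in hand the argument is just the verification that $\vu_2$ lies on the correct mass hyperplane and that its chemical potentials are all equal, the latter being built into the EnVarA construction of the $\sigma_\alpha$. The one point worth a clarifying sentence is the choice of function space for $\fS_1$: the computations above are pointwise in $x$ and $\vu_2$ is spatially homogeneous, so they go through verbatim whether the coefficients $s_i$ are taken to be constants or $x$-dependent, and no separate existence/attainment discussion for the minimizer is needed since it is exhibited explicitly.
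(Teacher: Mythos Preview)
Your argument is correct and in fact tighter than the paper's. The paper proceeds by perturbing $\vu_2$ along each generator $\vs_i$ one at a time: for $\vs_1$ it writes
\[
\frac{\dd E[\vu_2+\delta\vs_1]}{\dd\delta}=\ln(p^s_2+\delta)+\sigma_p-\ln(v^s_2-\delta)-\sigma_v,
\]
observes that $\delta=0$ is the unique zero, then computes the second derivative $1/(p^s_2+\delta)+1/(v^s_2-\delta)>0$, and declares ``similarly for other directions.'' Your route---strict convexity of the separable integrand on the positive cone, hence positive definiteness of the restricted Hessian on all of $\fS_1$, plus the identification of $\vu_0+\fS_1$ with the mass hyperplane via $\fS_1^\perp=\mathrm{span}\{(1,1,1,1/\epsilon)\}$ and the affinity computation $\mu_u=\mu_v=\mu_p=\mu_{\widetilde y}$ at $\vu_2$---reaches the same endpoint but buys more: it gives a \emph{global} (and unique) minimum rather than merely a local one, it makes explicit why $\vu_2$ actually lies on the constraint manifold, and it supplies the step the paper's direction-by-direction check leaves tacit, namely that positivity of the second variation along each $\vs_i$ really does upgrade to positive definiteness of the full $3\times3$ restricted Hessian.
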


\begin{proof}
   Without loss of generality, we perturb $\vu_2$  along the direction of  $\vs_2$  of 
    the three-dimensional manifold $\fS_1$    
   (a similar analysis applies to other directions)  and  obtain:
\begin{equation}
\frac{\dd E[\vu_2+\delta \vs_2]}{\dd \delta} = \ln (p^{s}_2+\delta) + \sigma_p - \ln (v^{s}_2-\delta) - \sigma_v.
\end{equation}
By the definition of $\sigma_p$ and $\sigma_v$, we conclude that $\delta=0$ is the unique solution of  $\frac{\dd E[\vu_2+\delta \vs_1]}{\dd \delta}=0$, indicating the critical point. Furthermore, by computing the second variation, we have:
\begin{equation}
\frac{\dd^2 E[\vu_2+\delta \vs_2]}{\dd \delta^2}=\frac{1}{p^{s}_2+\delta}+\frac{1}{v^{s}_2-\delta}.
\end{equation}
Since $v^{s}_2,p^{s}_2>0$ for any $\epsilon>0$, $\frac{\dd^2 E[\vu_2+\delta \vs_2]}{\dd \delta^2}\big|_{\delta=0}>0$, indicating that this steady state is a local minimizer in the $\vs_1$ direction, similarly for other directions.
\end{proof}

Next, we examine the energy landscape near the interior steady state and  its evolution as $\epsilon$ approaches zero. We plot the energy landscape around the steady state in three directions on $\fS_1$ shown in Fig. \ref{sta} for $\epsilon = 0.01$ and $\epsilon = 0.0001$ respectively. While the behavior of $\vs_1$ remains stable, for $\vs_2$ and $\vs_3$, we observe a notable change as $\epsilon$ diminishes: the stable region contracts significantly. Essentially, the inflection point approaches the steady state in this direction. 
\begin{figure}[!h]
\includegraphics[width=0.48 \textwidth]{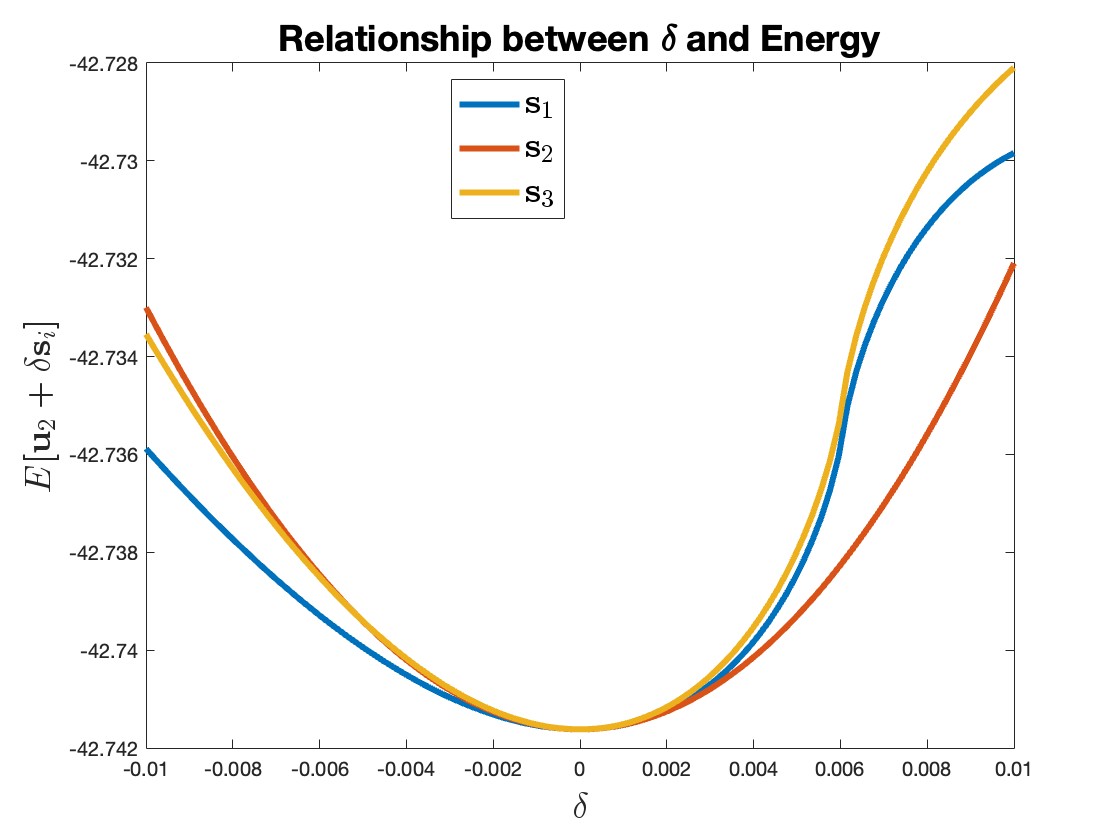}
\includegraphics[width=0.48 \textwidth]{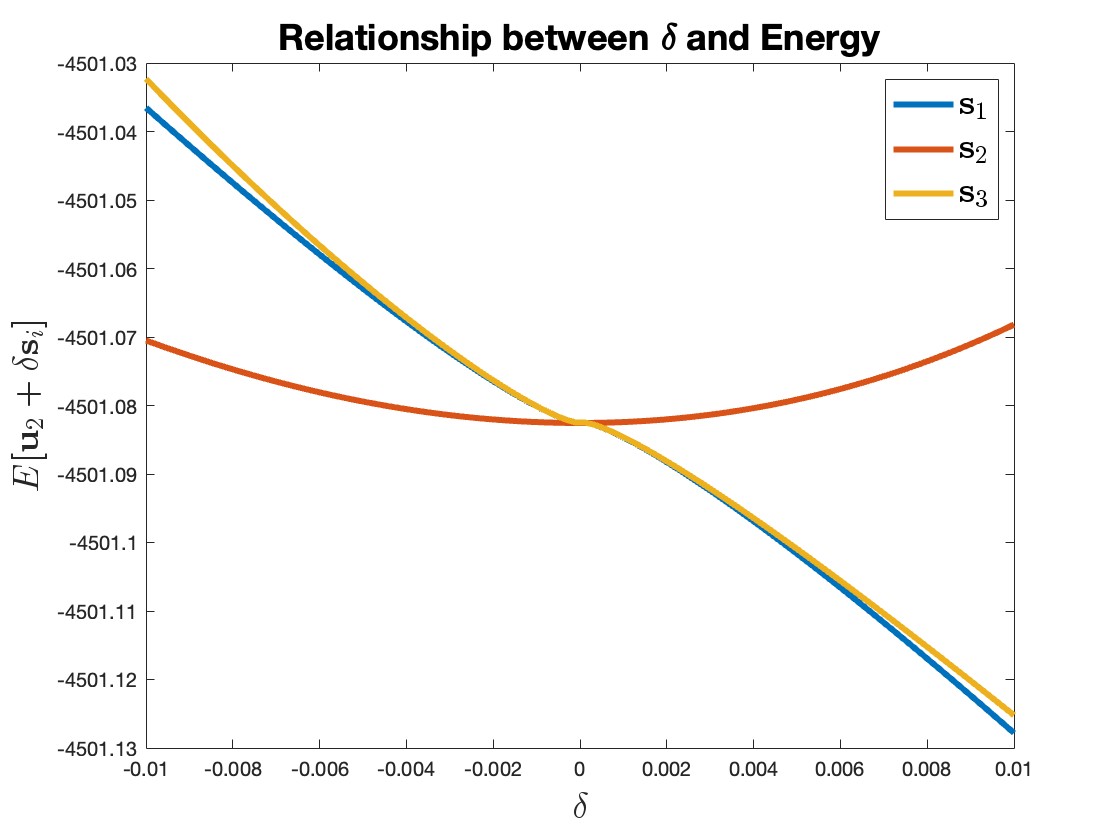}
\caption{Energy Landscape for $\epsilon = 0.01$ and $\epsilon_3=0.0001$ along the $\vs_i$ directions relative to the interior steady state.}\label{sta}
\end{figure}

The reason for this phenomenon is that $u_2^s$ will be close to $0$  when $\epsilon$ is small, as $u_2^s$ is at the order of $\mathcal{O}(\epsilon)$. Computing the second variation in the $\vs_1$ direction, we have
\begin{equation}
\frac{\dd^2 E[\vu_2+\delta \vs_1]}{\dd \delta^2}=\frac{1}{v^{s}_2+\delta}+\frac{1}{u^{s}_2-\delta}\ .
\end{equation}
Notice that $\frac{\dd^2 E[\vu_2+\delta \vs_1]}{\dd \delta^2} > 0$ requires $\delta < u_2^s$. Consequently, as $\epsilon$ decreases, the stability region around the local minimizer shrinks.
%the basin of the local minimum get smaller and smaller when $\epsilon$ gets smaller.
% {\color{red}In the $\vs_1$ direction, there are two inflection points, one of which occurs at\[\delta={u^{s}_2}=\fO(\epsilon)\] very close to zero when $\epsilon$ is small. }

%The schematic is shown in Fig.~\ref{second}.
%\begin{figure}[!h]
%\includegraphics[width=20em]{0.01var.jpg}
%\includegraphics[width=20em]{0.0001var.jpg}
%\caption{The second variation of energy for $\epsilon = 0.01$ and $\epsilon=0.0001$ along the $\vs_3$ directions relative to the interior steady state.}\label{second}
%\end{figure}

In summary, we have shown that the interior steady state remains stable for all $\epsilon>0$. However, as $\epsilon$ decreases, the stable region also becomes smaller. Nonetheless, it is still challenging for the system to escape from the steady state, as it would require $\delta > u^{s}_2$, which would result in $u<0$, an impossible scenario. Therefore, even for very small $\epsilon$, if the initial data is not far away from the steady state, the system will still converge to this interior steady state. However, when \(\epsilon\) is small, ensuring that the initial conditions are not too far from the steady state requires \(p\) to be of order \(\epsilon^{-1}\), meaning species \(P\) must have a significant presence. 

\subsection{Stability of the boundary steady-state}

For the boundary steady state, the stability analysis differs from that of the interior steady state. First, we observe that this is not a critical point of the free energy. Recall $\vs_1=(-1,1,0,0)\in \mathcal{S}_1$, then:
\begin{equation}
\frac{\dd E[\vu_1+\delta \vs_1]}{\dd \delta}=\ln (v^{s}_2+\delta) + \sigma_v - \ln (u^{s}_2-\delta) - \sigma_u = \ln \delta + \sigma_v - \ln (u^{s}_2-\delta) - \sigma_u.
\end{equation}
This expression tends to infinity as $\delta\to 0^+$, indicating it is not a critical point of the free energy. Indeed, we can compute the local minimizer of $E$ is the three-dimensional perturbation space $\vu_1 + \fS_1$, which satisfies:
\begin{equation}
  \begin{cases}
      \frac{u_1^s-\alpha_1 - \alpha_3}{\alpha_1-\alpha_2} = \epsilon \\ 
      \frac{\alpha_1-\alpha_2}{\alpha_2} = \frac{\epsilon}{f+k} \\
      \frac{\epsilon(u_1^s-\alpha_1-\alpha_3)}{y_1^s+\epsilon \alpha_3} = \frac{\epsilon}{f}.
  \end{cases}
\end{equation}
This system has a unique nonzero solution such that 
\[\vu_1 + \alpha_1 (-1,1,0,0) + \alpha_2(0,-1,1,0) + \alpha_3(-1,0,0,\epsilon)=\vu_2. \]
Hence, $\vu_2$ is the unique minimizer of the free energy $E$ in the space $\vu_1 + \fS_1$. The reason that ${\bm u}_1$ is a steady state of the system is that the mobilities for $R_1$ and $R_2$, given by $\epsilon v^3$ and $\epsilon p$, vanish, due to the disappearance of $V$ and $P$.
Consequently, the first and second reactions cease to exist:
\begin{equation}
\ce{U + 2 V <=>[1][\epsilon] 3V}, \quad \ce{V <=>[k$+$f][\epsilon] P}.
\end{equation}
We can show that the boundary steady state is a local minimizer of the free energy $E$ in a one-dimensional perturbation space \(\fS_2:=\operatorname{span}\{(-1,0,0,\epsilon)\}\), i.e., a perturbation along the direction of the third reaction trajectory:
\begin{prop}\label{virtual stability}
For any $\epsilon>0$, the boundary steady state
\begin{equation}
\vu_1=(u^s_1, v^s_1, p^s_1, y^s_1) := \left(\frac{\epsilon}{\epsilon+f} C,~ 0,~ 0, ~\frac{f\epsilon}{(\epsilon+f)} C\right),
\end{equation}
is a local minimizer of the free energy $\mathcal{F}$ on the manifold {$ {\vu}_0 + \fS_2$}, where ${\bf u}_0 = (u_0, v_0, p_0, y_0/\epsilon)$ is the initial condition.
\end{prop}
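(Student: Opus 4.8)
The plan is to mimic the proof of Proposition~\ref{statwo}, but now restricted to the one-dimensional manifold $\fS_2 = \operatorname{span}\{\vs_2\}$ with $\vs_2 = (-1,0,0,\epsilon)$. Concretely, I would consider the scalar function $\delta \mapsto E[\vu_1 + \delta \vs_2]$, whose admissible domain is the interval on which all components of $\vu_1 + \delta \vs_2$ stay nonnegative; since $v^s_1 = p^s_1 = 0$ are untouched by $\vs_2$ and $u^s_1, y^s_1/\epsilon > 0$, the point $\delta = 0$ is genuinely interior to this domain, which is exactly why (unlike for $\vs_1$) it can be a legitimate critical point.

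First I would compute the first variation. Using $\mathcal{F} = \int u(\ln u - 1) + u\sigma_u + \cdots$ and the fact that the perturbation along $\vs_2$ changes only $u$ (by $-\delta$) and $\widetilde y = y/\epsilon$ (by $+\delta$, since $y$ changes by $\epsilon\delta$), one gets
\begin{equation}
\frac{\dd E[\vu_1 + \delta \vs_2]}{\dd \delta} = -\bigl(\ln(u^s_1 - \delta) + \sigma_u\bigr) + \bigl(\ln(\widetilde y^s_1 + \delta) + \sigma_{\widetilde y}\bigr),
\end{equation}
i.e.\ $\mu_{\widetilde y} - \mu_u$ evaluated at the perturbed state. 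At $\delta = 0$ this is $\ln \widetilde y^s_1 + \sigma_{\widetilde y} - \ln u^s_1 - \sigma_u$, which vanishes precisely because $\vu_1$ satisfies the equilibrium relation $f u^s_1 = y^s_1$ together with the defining identity $\sigma_{\widetilde y} - \sigma_u = \ln\epsilon - \ln f$ from~(\ref{sigma_1}) — this is the third line of~(\ref{sigma}). So $\delta = 0$ is a critical point on $\fS_2$, and monotonicity of $\ln$ shows it is the unique one. Then I would compute the second variation,
\begin{equation}
\frac{\dd^2 E[\vu_1 + \delta \vs_2]}{\dd \delta^2} = \frac{1}{u^s_1 - \delta} + \frac{1}{\widetilde y^s_1 + \delta},
\end{equation}
which at $\delta = 0$ equals $1/u^s_1 + 1/\widetilde y^s_1 > 0$ since both $u^s_1 > 0$ and $\widetilde y^s_1 = f\epsilon C/(\epsilon+f)\cdot(1/\epsilon) > 0$ for every $\epsilon > 0$. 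Hence $\delta = 0$ is a strict local minimum of $E$ on $\vu_0 + \fS_2$.

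The only genuine subtlety — and the step I would be most careful about — is justifying that $\fS_2$, not $\fS_1$, is the correct perturbation manifold: one must argue that perturbations along $\vs_1$ and $\vs_3$ are inadmissible because they would force $v$ or $p$ to become negative (the reactions producing $V$ and $P$ have no reactants present at $\vu_1$, so the reaction trajectories $R_1, R_2$ cannot run in the direction that would perturb away from $v = p = 0$). This is the argument already sketched in the text just before the proposition, so in the proof I would simply invoke it: restricted to the physically admissible cone, the reachable perturbations from $\vu_1$ form exactly the ray $\{\delta \vs_2 : \delta \ge -u^s_1\}$ (or a neighborhood thereof within $\vu_0 + \fS_2$), and on this set the computation above shows $\vu_1$ minimizes $\mathcal{F}$. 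Everything else is the same two-line first/second-variation calculation as in Proposition~\ref{statwo}, with no real obstacle.
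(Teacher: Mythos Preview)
Your proposal is correct and is exactly the approach the paper takes: the paper's proof reads in full ``The proof follows a similar structure to that of Proposition~\ref{statwo},'' and your first/second-variation computation along $\vs_2$ is precisely that structure, with the vanishing of the first variation at $\delta=0$ coming from $f u^s_1 = y^s_1$ together with $\sigma_{\widetilde y}-\sigma_u=\ln\epsilon-\ln f$. Your extra paragraph justifying why $\fS_2$ (rather than $\fS_1$) is the admissible perturbation space is already the content of the discussion preceding the proposition in the paper, so you are in complete agreement.
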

\begin{proof}
   The proof follows a similar structure to that of Proposition \ref{statwo}.
\end{proof}

Although the boundary steady state is only stable along the direction $(-1,0,0,\epsilon)$ and not stable in the $(-1,1,0,0)$ and $(0,1,-1,0)$ directions. It still behaves as a nearly local minimizer when $\epsilon$ is small and $\mathcal{O}(1)$ amounts of U, V, and P, and $\mathcal{O}(\epsilon^{-1})$ of Y present initially.
More precisely, since the first and second reactions account for only a fraction of $\epsilon$ in the system, 
it is reasonable to consider the perturbation along the direction
\[\vs =\vs_3 + \epsilon \vs_1 + \epsilon \vs_2 = (-1,0,0,\epsilon)+(-\epsilon,0,\epsilon,0),\] direct calculation reveals that:
\begin{equation}
\lim_{\epsilon \to 0} \frac{\dd E[\vu_1+\delta \vs]}{\dd \delta}=\lim_{\epsilon \to 0} \frac{\dd E[\vu_1+\delta \vs_3]}{\dd \delta}+\lim_{\epsilon \to 0} \frac{\dd E[\vu_1+\epsilon\delta (\vs_1+\vs_2)]}{\dd \delta}=\lim_{\epsilon \to 0} \frac{\dd E[\vu_1+\delta {(-1,0,0,\epsilon)}]}{\dd \delta}=0.
\end{equation}
Additionally, numerical results further support this assertion, as shown in Figure \ref{near}. The boundary steady state is a local minimizer in the one-dimensional space for small $\epsilon$. This is primarily because most reactions in this scenario are governed by the third reaction, which is stable for the boundary steady state. % 
Consequently, for certain initial conditions, the system will converge to the boundary steady state when $\epsilon$ is small.  We term it as \textit{virtual stability}. The reason for labeling this as ``virtual'' is that the third reaction is artificial and virtual, as it does not exist in the classical Gray-Scott models defined by Eq.~(\ref{classical}). Moreover, when $\epsilon$ is not very small, it can be noticed that the boundary steady state is not a local minimizer, even within the restricted one-dimensional setting. As a result, a small perturbation around it leads to convergence toward the interior steady state. 
\begin{figure}[!h]
\includegraphics[width=0.48 \textwidth]{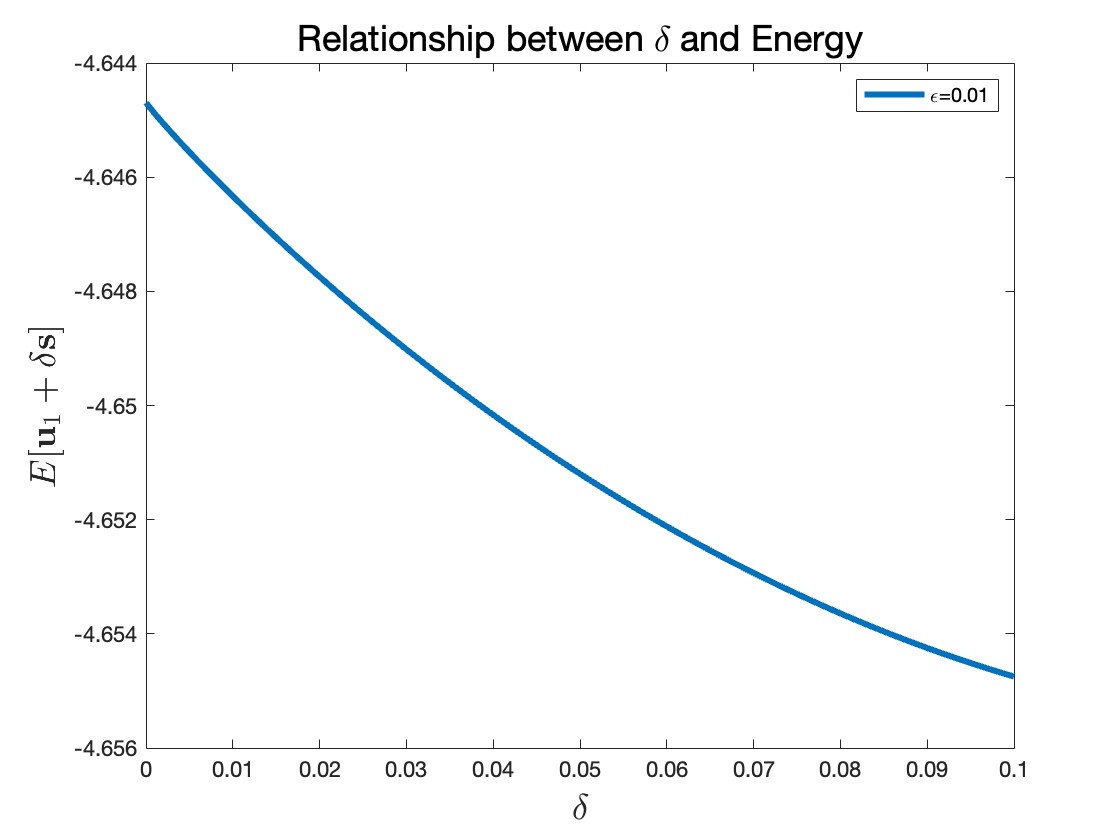}
\hfill
\includegraphics[width=0.48 \textwidth]{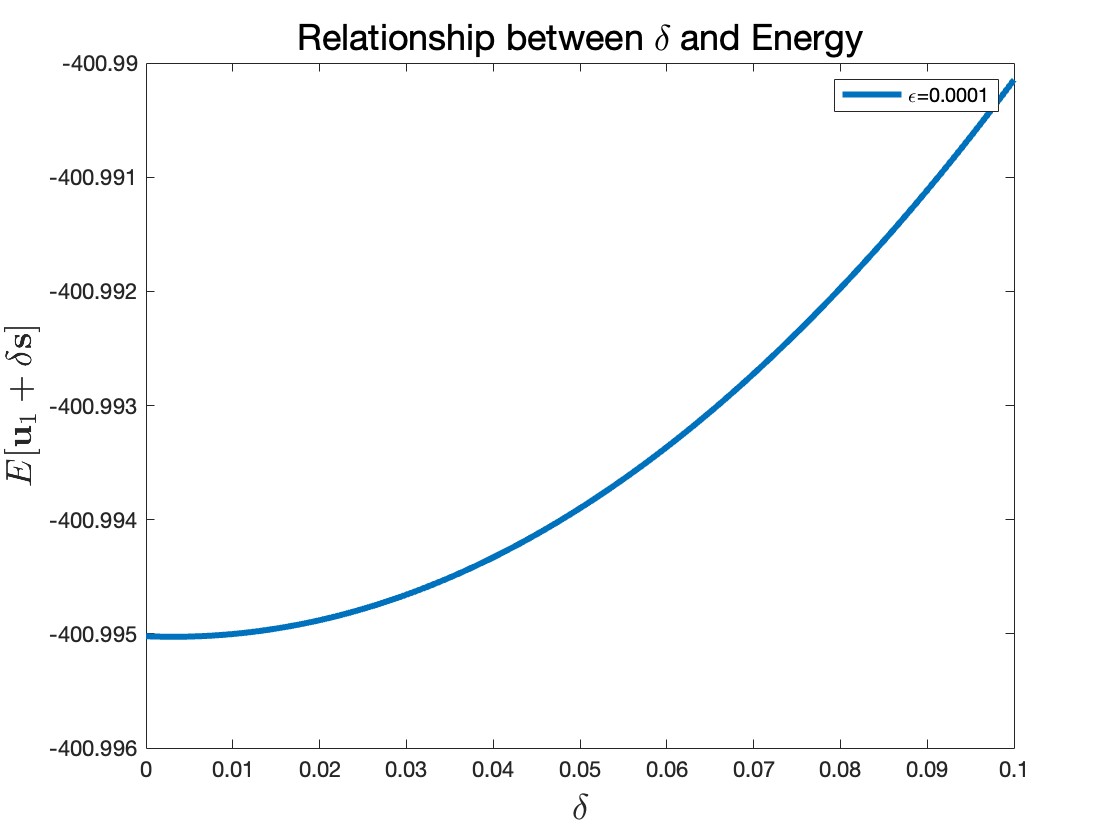}
\caption{Numerical results demonstrating near-stability when Y dominates almost the entire system.}\label{near}
\end{figure}

Overall, we observe that the interior steady state is stable, while the boundary steady state is considered virtually stable. The virtual stability arises when V and P disappear quickly in the system. In such cases, the initial terms of P and V occupy only a fraction of $\epsilon$ in the entire system, while the term Y dominates almost entirely. This scenario satisfies the conditions for virtual stability, leading the system to converge to the boundary steady state.

% More details about the relationship between \(\epsilon\) and the time for converging to the trivial steady state will be shown in the next section.

% {\color{blue}The Turing instability occurs when a spatial perturbation destabilized the steady state $u_k = u_{*}$, $v_k = v_{*}$, pushing the dynamics into a different, spatially heterogeneous (e.g. patterned) state. We can view the steady state in the classical Gray-Scott model as a certain perturbation of the two uniform steady states analyzed in the previous section. }

\section{Pattern formulation in Variational Gray-Scott Models}\label{experiment}

In this section, we explore the pattern formulation in the variational Gray-Scott model for different $\epsilon$ in one dimension.

For the classical Gray-Scott model (where $\epsilon = 0$), stationary spatial patterns correspond to non-uniform steady states of the system \cite{hao2020spatial}. Additionally, traveling patterns exist, which correspond to traveling wave solutions \cite{qi2017travelling}.
In \cite{hao2020spatial},  the authors compute steady states of the classical Gray-Scott model in the domain $\Omega = (0, 1)$ subject to the non-flux boundary condition under the standard finite difference discretization. The parameter values used are $D_u = 5 \times 10^{-4}$ and $D_v = 2.5 \times 10^{-4}$, $f = y_0 = 0.04$, and $k = 0.065$. 
It shows that, under these parameter values and the non-flux boundary condition, in addition to the trivial uniform steady state $(u, v) = (0, 1)$, the classical Gray-Scott model has $6$ non-uniform linearly stable steady states and $16$ linearly unstable nonuniform steady states, shown in Fig. \ref{Sol_in_GS}.
These steady states are computed using homotopy continuation techniques. Their linear stability is determined by analyzing the eigenvalues of the Jacobian matrix at the steady states. Specifically, the spectrum is computed numerically by solving the characteristic equation for the Jacobian. If any eigenvalue has a
positive real part, the steady state is unstable, while if all eigenvalues have negative real parts, the steady state is stable.

\begin{figure}[!hb]
  \centering
      \includegraphics[width= 0.9 \linewidth]{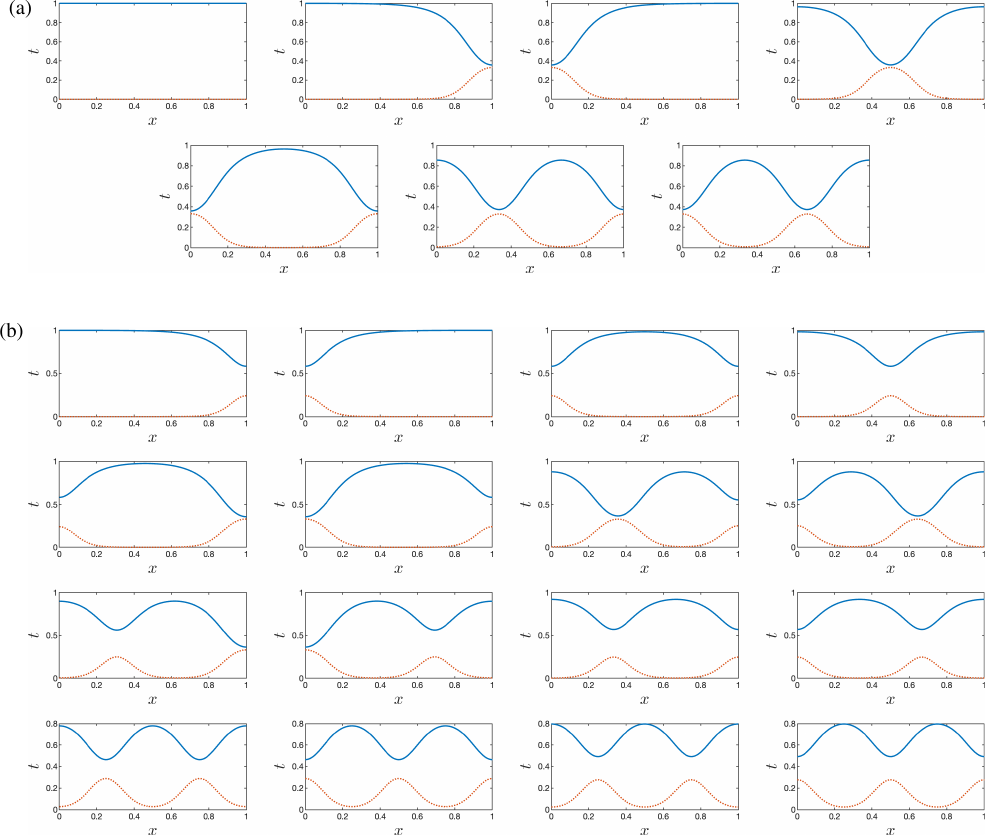}
      \caption{Steady states of the classical Gray-Scott model in the domain $\Omega = (0, 1)$, subject to the non-flux boundary condition, with parameter values $D_u = 5 \times 10^{-4}$ and $D_v = 2.5 \times 10^{-4}$, $f = y_0 = 0.04$, and $k = 0.065$, computed in \cite{hao2020spatial}: (a) Linearly stable steady states ($u$ : solid line, $v$: dashed line). (b) Linearly unstable steady states ($u$ : solid line, $v$: dashed line).}
      \label{Sol_in_GS}
  \end{figure}

\begin{figure}[!b]
  \centering
    \includegraphics[width = 0.9 \textwidth]{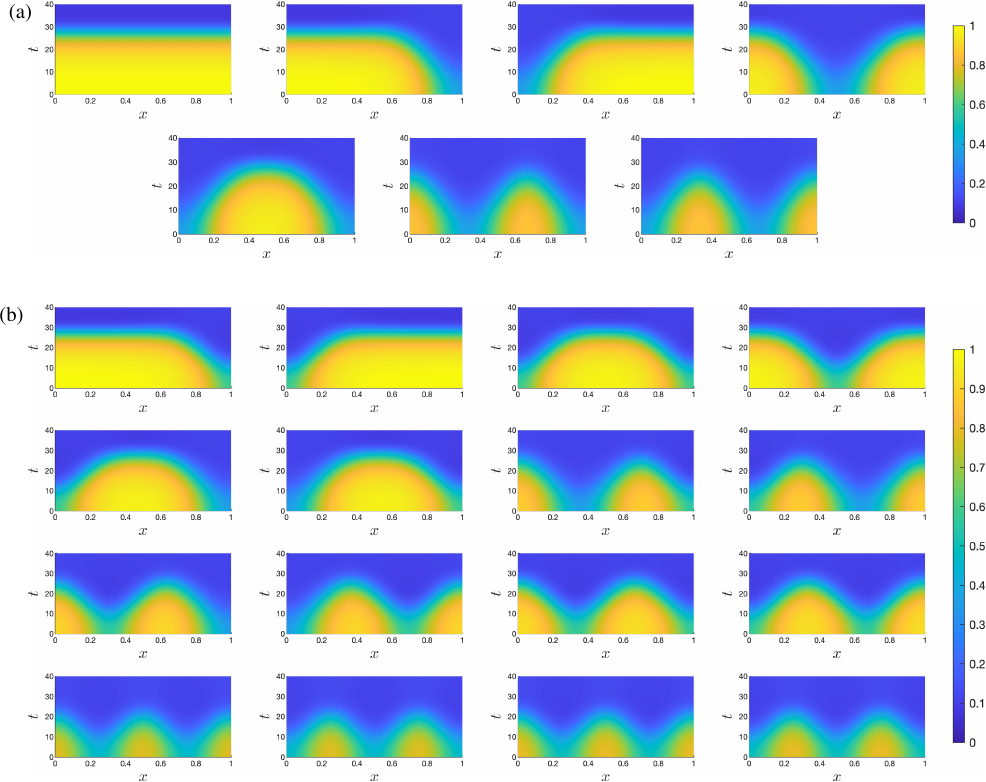}   
    \caption{(a) Numerical results for the variational Gray-Scott model with $\epsilon = 10^{-2}$ using linearly stable steady states of the classical Gray-Scott model as initial conditions. (b) Numerical results for the variational Gray-Scott model with $\epsilon = 10^{-2}$ using linearly unstable steady states of the classical Gray-Scott model as initial conditions}
    \label{Res_1e-2}
\end{figure}

To explore pattern formation in the variational Gray-Scott model for different $\epsilon$, we use these 23 steady states of the classical Gray-Scott model as the initial conditions for $(u, v)$. The initial conditions of $p$ and $y$ are taken as
\begin{equation}
p_0(x) = 1, \quad y_0(x) = f.
\end{equation}
We impose the same no-flux boundary conditions as in \cite{hao2020spatial}. We then examine the evolution of these initial conditions for various values of $\epsilon$, with the expectation that they serve as suitable starting points for capturing pattern formation in the variational model. These initial conditions can also be interpreted as spatially heterogeneous perturbations of the two uniform steady states analyzed in the previous section.

We adopt a semi-implicit method, which treats the reaction part explicitly and the diffusion part implicitly, for the temporal discretization. The spatial grid size is $h = 1/256$.
Since the initial concentration of species $Y$ is $y_0 / \epsilon$, for smaller $\epsilon$, more $Y$ exist and the total mass $C$, defined in (\ref{Def_C}), is also larger. We consider $\epsilon 
 = 10^{-2}$, $10^{-4}$, and $10^{-6}$, corresponding to a relatively large $\epsilon$, an intermediate-sized $\epsilon$,and a significantly small $\epsilon$, respectively.

\subsection{$\epsilon = 10^{-2}$: relatively large $\epsilon$}

We first consider a relatively large $\epsilon$ by taking $\epsilon = 10^{-2}$. Fig. \ref{Res_1e-2}(a) - (b) shows the time evolution of different initial conditions for $\epsilon = 10^{-2}$, visualized through $u(x, t)$ for $x \in (0, 1)$ and $t \in (0, 40)$. The temporal step size is set to $\Delta t = 0.5$. Numerical results show that decreasing the temporal step size yields quantitatively similar solutions. The initial conditions in Fig. \ref{Res_1e-2}(a) correspond to linearly stable steady states of the classical Gray-Scott model, while those in Fig. \ref{Res_1e-2}(b) correspond to linearly unstable steady states.

\begin{figure}[!b]
   \centering
   \includegraphics[width = 0.9 \textwidth]{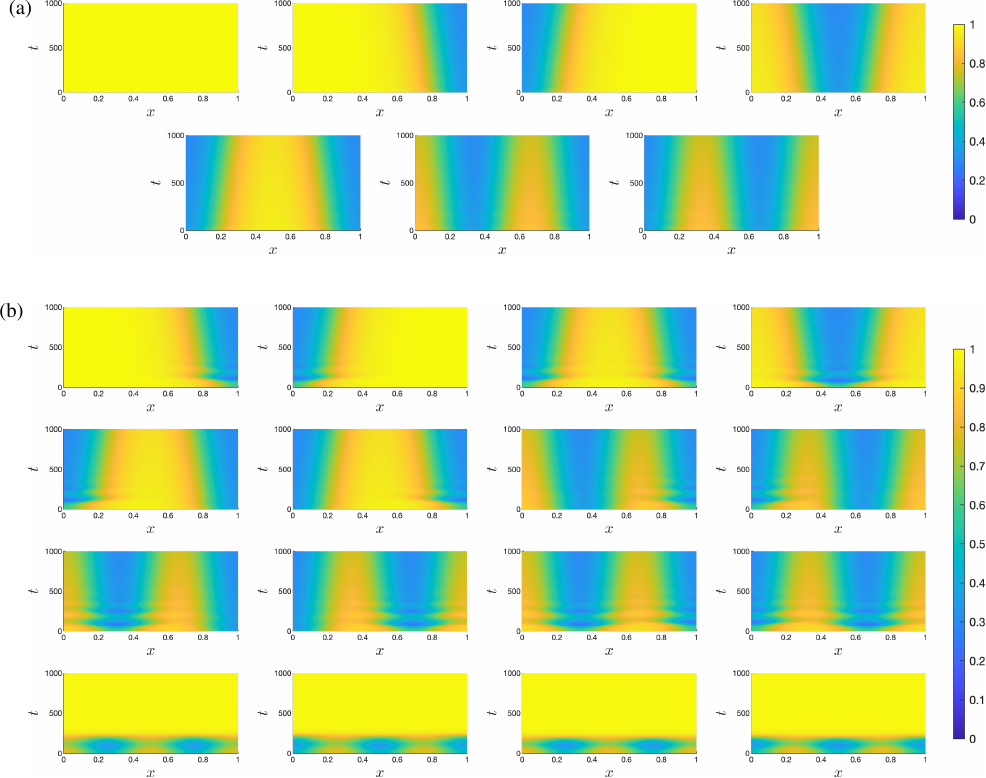}   
    \caption{(a) Numerical results for the variational Gray-Scott model with $\epsilon = 10^{-4}$ using linearly stable steady states of the classical Gray-Scott model as initial conditions. (b) Numerical results for the variational Gray-Scott model with $\epsilon = 10^{-4}$ using linearly unstable steady states of the classical Gray-Scott model as initial conditions.}
    \label{Res_1e-4}
\end{figure}

The simulation result shows that, for relatively large $\epsilon$, all initial conditions, including the one with $(u, v) = (1, 0)$, converge to the uniform interior steady state ${\bm u}_2$, as the concentration of $U$ will decrease to around $0$. As a result, any non-uniform patterns in the limiting system are destroyed in a relatively short time for a relatively large $\epsilon$. This result is consistent with the theoretical analysis presented in the previous section, as the interior steady state ${\bm u}_2$ is a minimizer of the free energy and the boundary steady state is not virtually stable for a relatively large $\epsilon$.

\subsection{$\epsilon=10^{-4}$: Intermediate-sized $\epsilon$}

Next, we consider an intermediate-sized $\epsilon$, $\epsilon = 10^{-4}$.  The time evolution of different initial conditions for $\epsilon = 10^{-4}$, visualized by $u(x, t)$ for $x \in (0, 1)$ and $t \in (0, 1000)$, are shown in Fig.\ref{Res_1e-4}(a) - (b). The temporal step size is set to $\Delta t = 0.5$. The initial conditions in Fig. \ref{Res_1e-4}(a) correspond to linearly stable steady states of the classical Gray-Scott model, while those in Fig. \ref{Res_1e-4}(b) correspond to linearly unstable steady states.

Unlike the case of $\epsilon = 10^{-2}$, the initial condition with $(u, v) = (1, 0)$ can converge to the boundary steady state. There are four initial conditions that converge to the boundary steady state ${\bm u}_1$ quickly. The result is consistent with the theoretical analysis that the boundary steady state is virtually stable for small $\epsilon$ .
Other initial conditions tend to converge to the interior steady state ${\bm u}_2$ as the concentration of $U$ decreases. However, the convergence is very slow, and non-uniform patterns can persist for a long time. Non-stationary patterns are still observable at $t = 1000$. For further illustration, Fig. \ref{fig:sol_u_1e-4}(a)-(b) show the profile of $u(x, t)$ at $t = 0, 250, 500, 750$ and $1000$ for two different initial conditions, corresponding to linearly stable solutions 4 and 6 in the classical Gray-Scott model. The results clearly show that $u(x, t)$ flattens over time and its total mass decreases in both cases.
\begin{figure}[!h]
    \centering
    \begin{overpic}[width=0.43\linewidth]{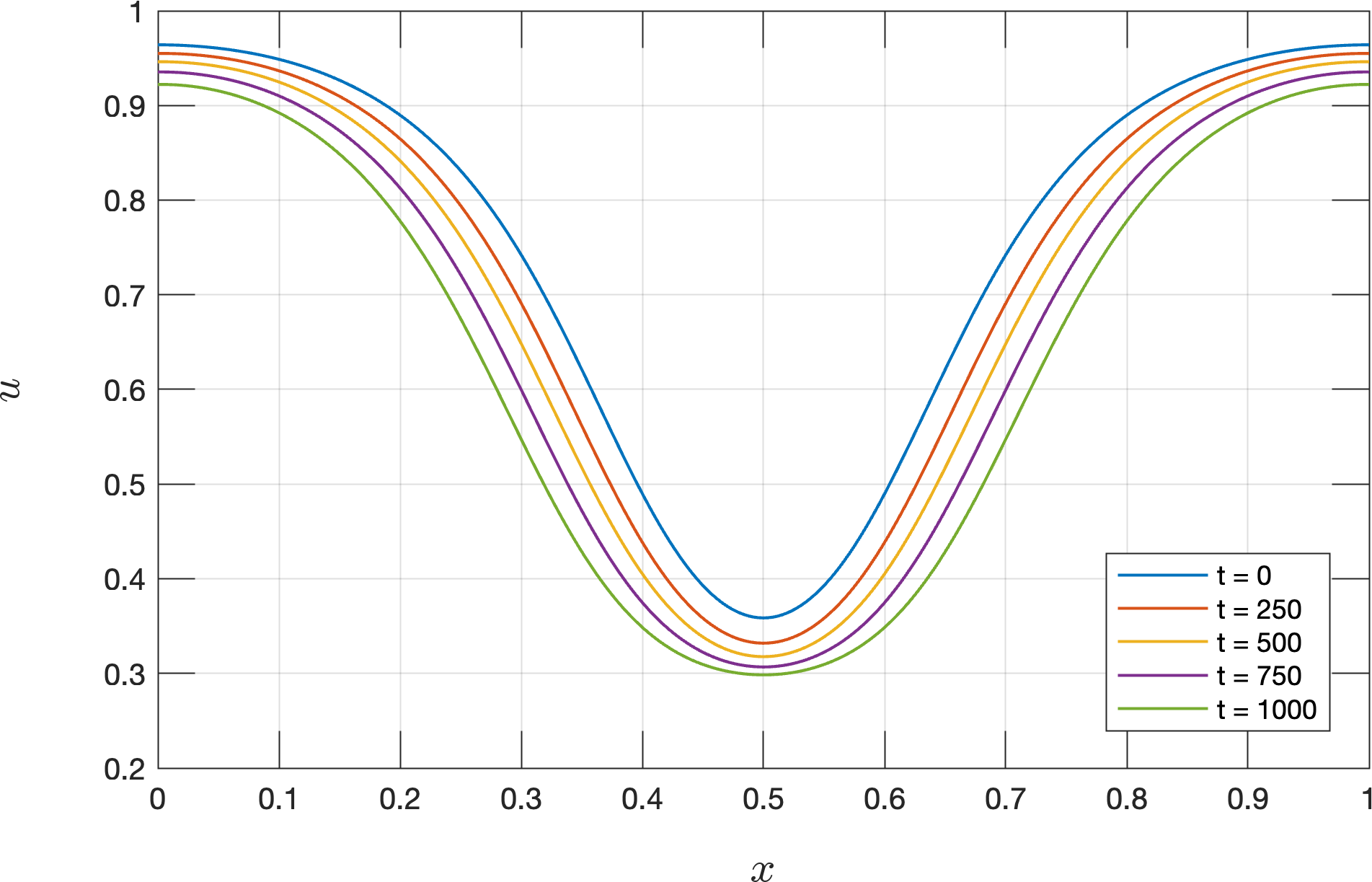}
    \put(-2, 60){(a)}
    \end{overpic}
    \hspace{2em}
    \begin{overpic}[width=0.43\linewidth]{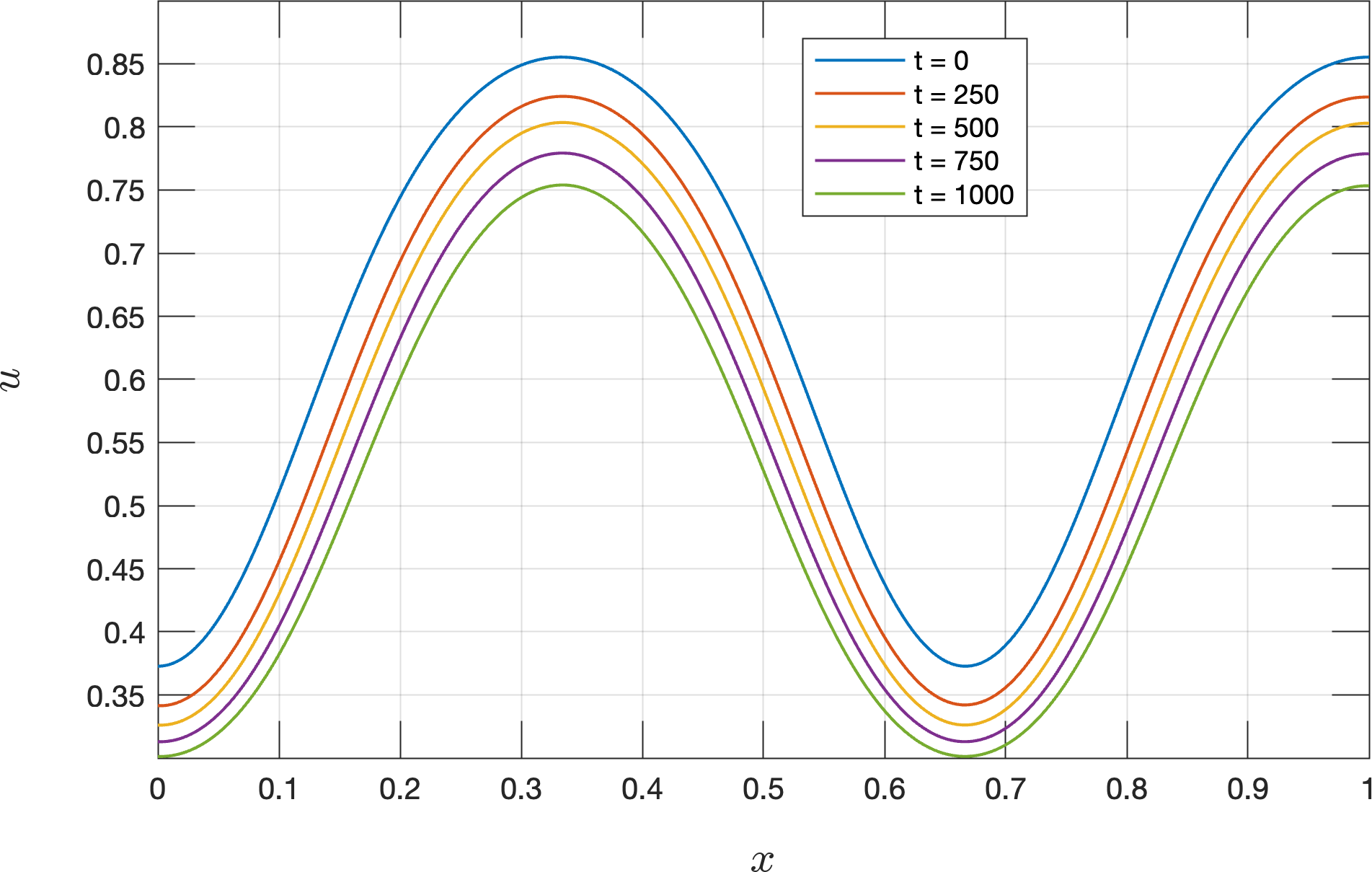}
      \put(-2, 60){(b)}
    \end{overpic}
    \caption{The profile of $u(x, t)$ at $t = 0, 250, 500, 750$ and $1000$ for two different initial conditions: (a) linearly stable solution 4 and (b) linearly stable solution 6.}
    \label{fig:sol_u_1e-4}
\end{figure}

%Moreover, one can observe the oscillating behavior in some solution.
%For \(\epsilon=10^{-4}\), we observe that the system can still converge to both the interior and boundary steady states from different non-uniform steady states. Since \(\epsilon\) in this case is larger than \(10^{-6}\), as discussed before, we can more clearly observe the system converging to the interior steady state. This is because the speed of convergence to the interior steady state increases, and we can see the blue region increase. This supports the notion that for nonzero \(\epsilon\), the system will converge to the trivial steady state. At this time, it is difficult to observe the performance of the traveling wave, as it is overshadowed by the performance of convergence to the trivial steady state.

More interestingly, for $\epsilon = 10^{-4}$, we observe oscillating solutions during the time evolution. In the original paper on the Gray-Scott model \cite{gray1984autocatalytic}, Gray and Scott demonstrate that the system exhibits chemical oscillations even without diffusion. 

\begin{figure}[!h]
   \centering
   \begin{overpic}[width = 0.4 \linewidth]{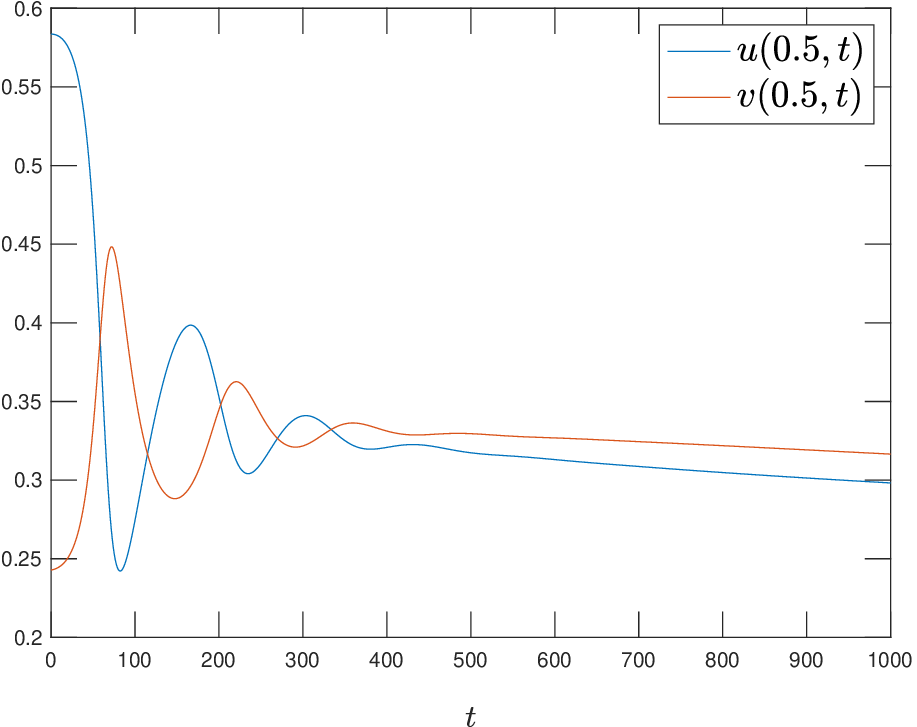}
     \put(-8, 75){(a)}
   \end{overpic}
   \hspace{3em}
   \begin{overpic}[width = 0.4 \linewidth]{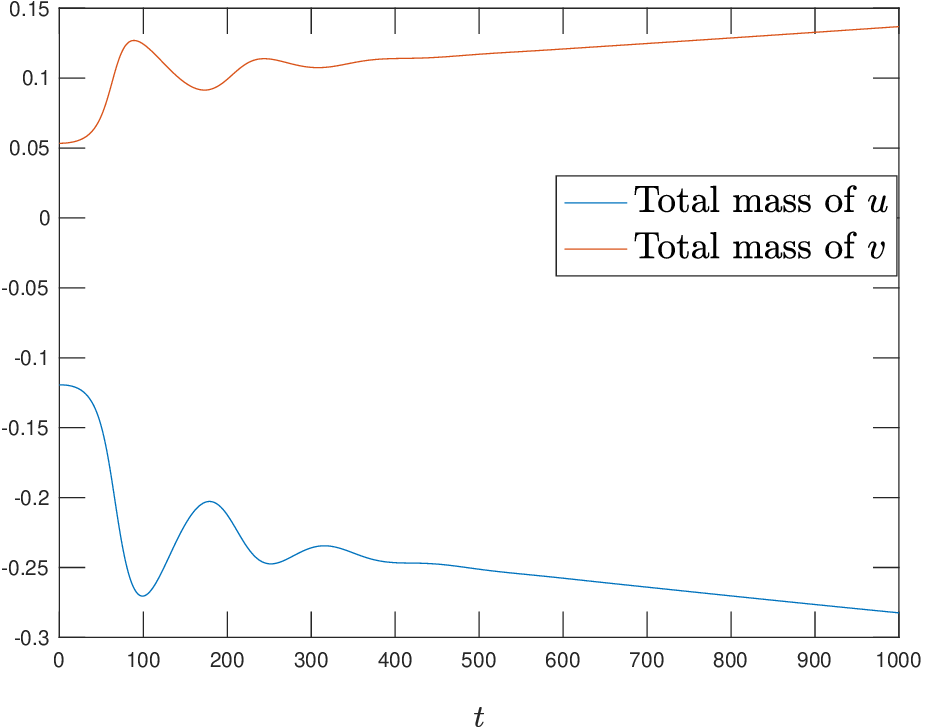}
       \put(-8, 75){(b)}
   \end{overpic}
      \caption{Oscillation behavior for the solution with initial condition corresponding to the unstable solution 4. (a) Evolution of $u(0.5, t)$ and $v(0.5, t)$ with respect to time. (b)  Evolution of total mass of $u$ and $v$ with respect to time. }\label{Res_os_1e-4}
\end{figure}

To illustrate the oscillation in the variational Gray-Scott model,  we examine the solution with the initial condition corresponding to the linearly unstable solution 4 (fourth image in Fig. \ref{Res_1e-4}(b)) in detail. Fig. \ref{Res_os_1e-4}(a)-(b) show the evolution of $u(0.5, t)$ and $v(0.5, t)$, as well as the total mass of $u$ and $v$, respectively. The plots show the damped oscillations in the concentration of $u$ and $v$, which are similar to the phenomenon reported in Fig. 4 in \cite{gray1984autocatalytic} for the irreversible Gray-Scott model without diffusion. After the oscillation, the solution behavior is similar to the solution with the initial condition corresponding to the stable solution 3 (fourth image in Fig. \ref{Res_1e-4}(a)).

%Chemical Oscillations are observed in in the classical Gray-Scott model without diffusion \cite{}
% Chemical Oscillations

\subsection{$\epsilon = 10^{-6}$: significantly small $\epsilon$}

Next, we consider $\epsilon = 10^{-6}$, a significantly small $\epsilon$. Fig. \ref{Res_1e-6} shows the simulation results for $\epsilon  = 10^{-6}$, visualized by $u(x, t)$ for $x \in (0, 1)$ and $t \in (0, 1000)$. Again, the initial conditions in Fig. \ref{Res_1e-6} (a) correspond to linearly stable steady states of the classical Gray-Scott model, while those in Fig. \ref{Res_1e-6} (b) correspond to linearly unstable steady states.
% The situation of $\epsilon  = 10^{-6}$ is more complicated.

\begin{figure}[!h]
  \centering
   \includegraphics[width = 0.9 \textwidth]{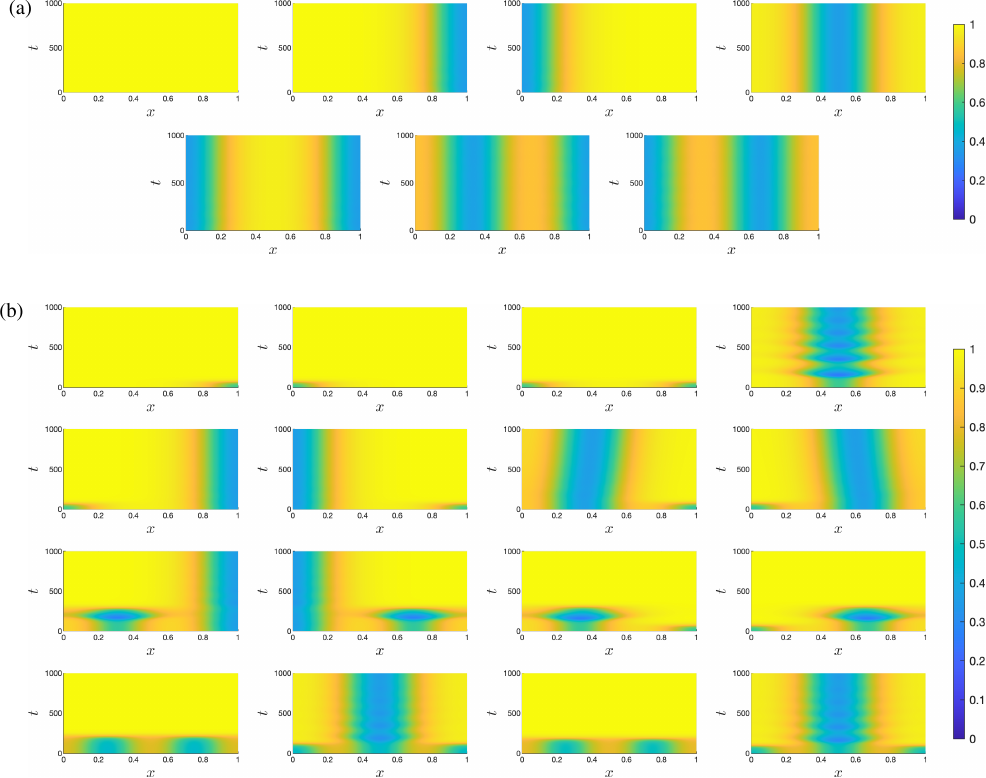}  
      \caption{(a) Numerical results for the variational Gray-Scott model with $\epsilon = 10^{-6}$ using linearly stable steady states of the classical Gray-Scott model as initial conditions. (b) Numerical results for the variational Gray-Scott model with $\epsilon = 10^{-6}$ using linearly unstable steady states of the classical Gray-Scott model as initial conditions.}\label{Res_1e-6}
      \end{figure}

Like the case with $\epsilon = 10^{-4}$, due to the virtual stability, the initial condition with $(u, v) = (1, 0)$  converges to the boundary steady state quickly. Additionally, more initial conditions will converge to the boundary steady state quickly, compared with the case of $\epsilon = 10^{-4}$. 

Interestingly, for the initial conditions corresponding to six non-uniform, linearly stable steady states in the classical Gray-Scott model, the profiles of $u$ and $v$ remain unchanged throughout the evolution. In other words, these linearly stable stationary patterns in the classical Gray-Scott model can be stabilized as {\bf transient states} in the variational model for a very long time when $\epsilon$ is significantly small. We can view these solutions as  {\bf quasi-steady states} or quasi-stable patterns as the $(u, v)$-component of the solution is unchanged. 
Fig. \ref{PandX} shows the concentrations of $U$, $V$, $P$, and $Y$ at $t = 0$ and $t = 1000$ for the initial condition associated with the non-uniform linearly stable solution 5. It can be observed that although the concentrations of $U$ and $V$ remain unchanged, the concentration of $P$ decreases while $Y$ increases by the same amount. The effective dynamics of the whole system is to transform $Y$ to $P$.
From the numerical experiments, %it is not clear whether the solution will finally converge to interior steady. But 
one can expect the system will reach the interior steady \(u_2\) at the end as the concentration of $P$ increases and $Y$ decreases.
However, since the initial concentration of $Y$ is much larger than that of $P$, a significant time is needed to reach the steady state. Consequently, the spatial pattern can be stabilized for a long time if the concentration of $Y$ is large, i.e., $y(x) \approx f$. Additionally, there are four initial conditions, corresponding to linearly unstable steady-state 5, 6, 9, and 10, which will initially evolve towards a quasi-steady state and will remain unchanged in the $(u, v)-$ components for a long time.
The simulation results indicate that the variational Gray-Scott model can capture the formation of a stationary pattern in the irreversible Gray-Scott model.
\begin{figure}[!h]
\centering
\begin{overpic}[width = 0.49 \linewidth]{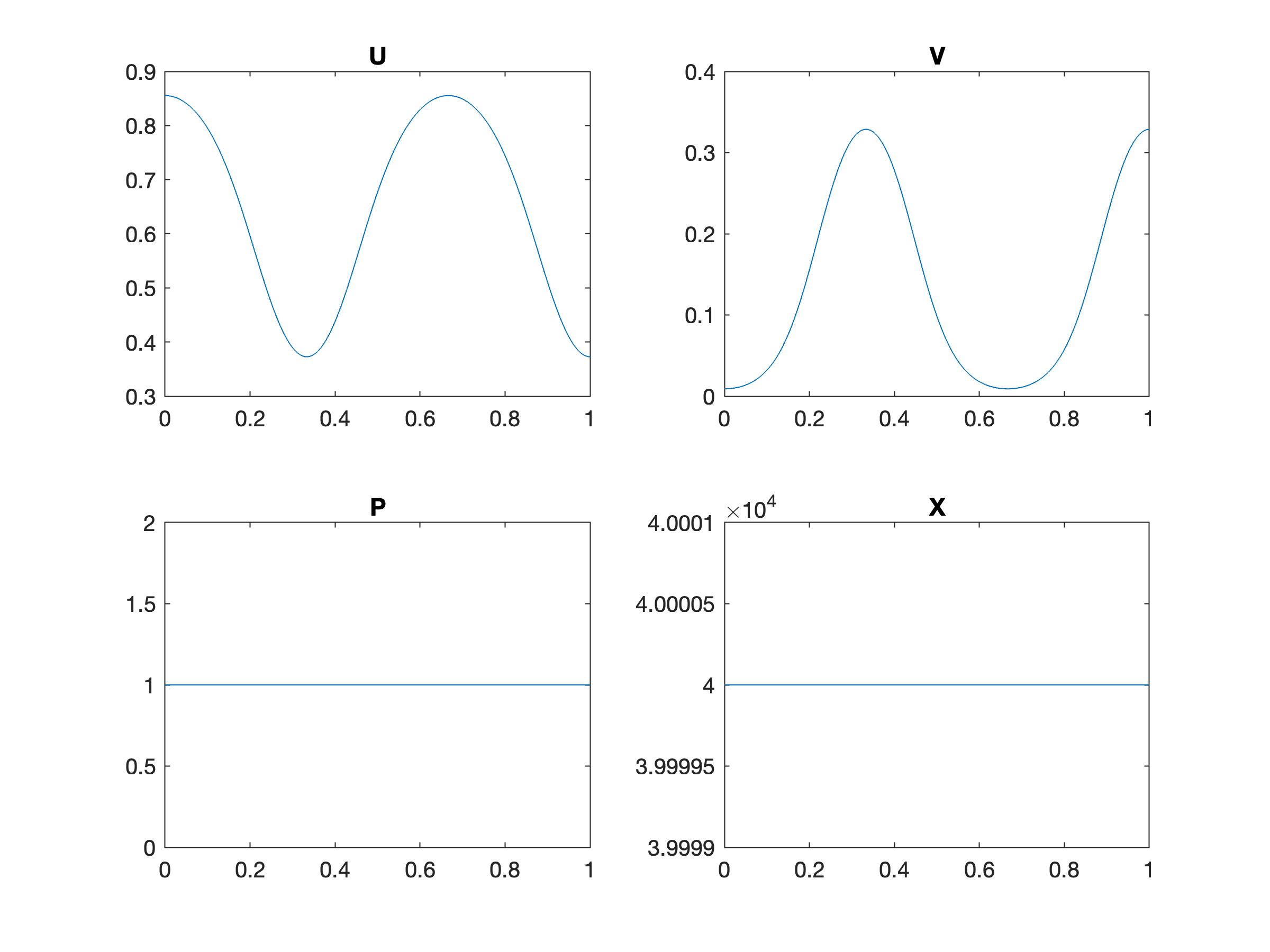}
\put(0, 65){(a)}
    \end{overpic}
\hfill
\begin{overpic}[width = 0.49 \linewidth]{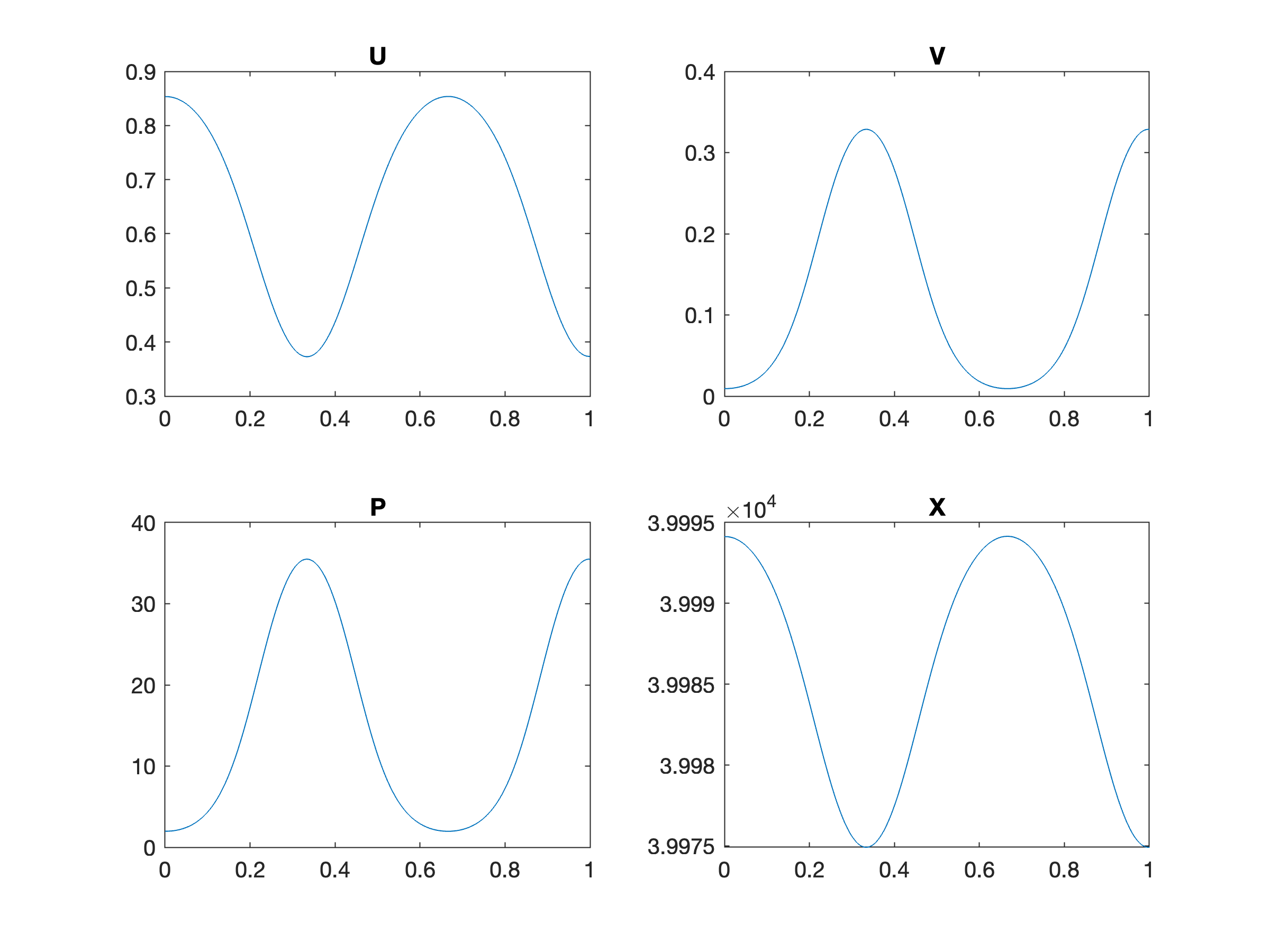}
               \put(0, 65){(b)}
    \end{overpic}
\caption{Concentration of each species at $t = 0$ and $t = 1000$. Clearly, the profile of $u$ and $v$ is almost unchanged. But the concentration of $P$ increases significantly and $Y$ decreases. }\label{PandX}
\end{figure}

\begin{figure}[!b]
   \centering
   \begin{overpic}[width = 0.4 \linewidth]{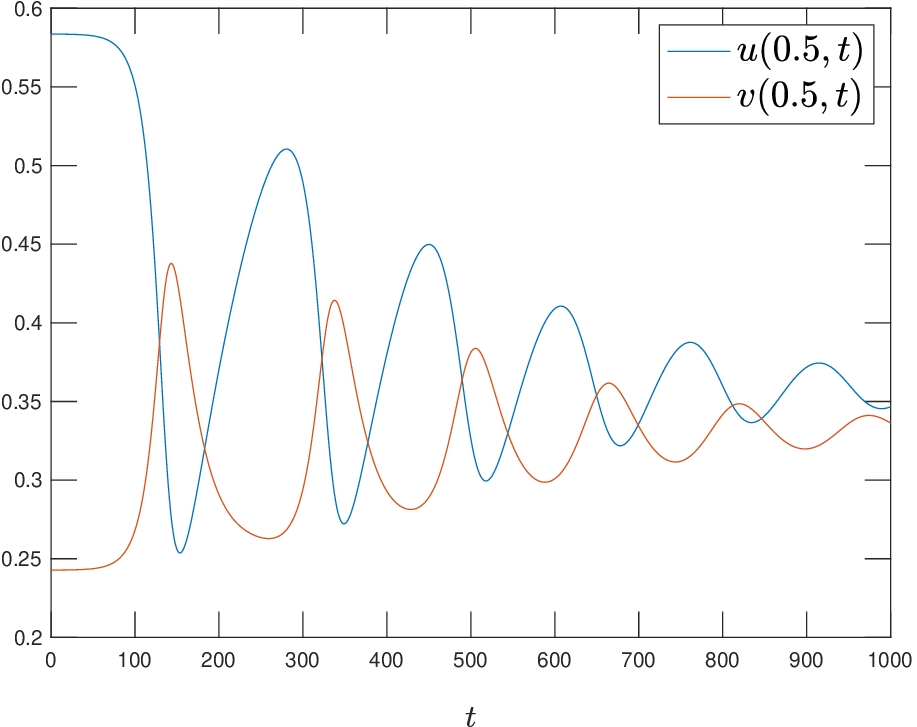}
      \put(-8, 75){(a)}
   \end{overpic}
   \hspace{3em}
   \begin{overpic}[width = 0.4 \linewidth]{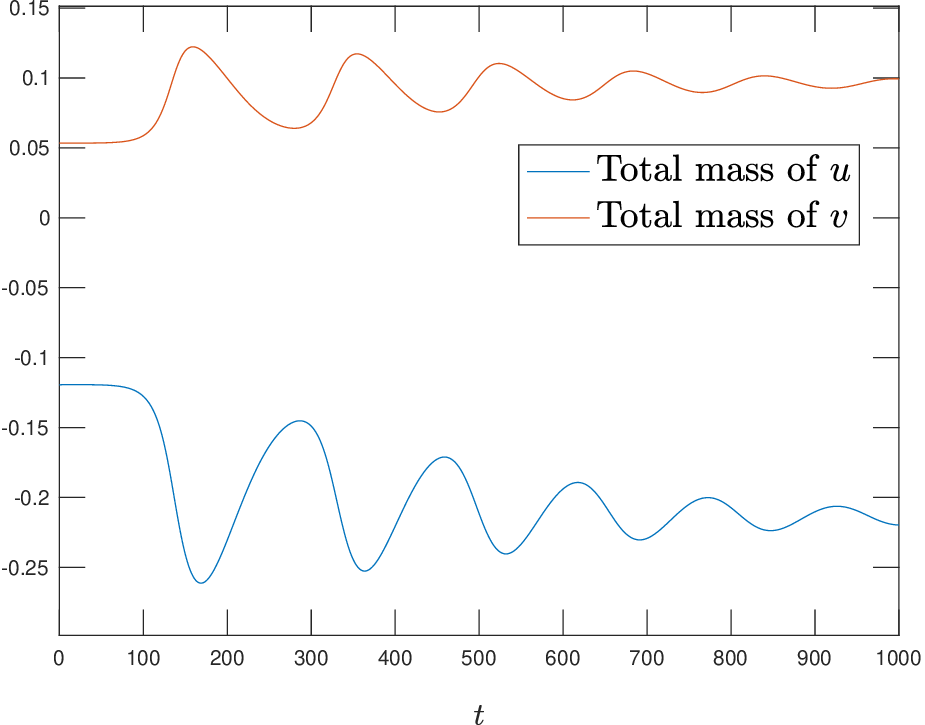}
      \put(-8, 75){(b)}
   \end{overpic}
      \caption{Oscillation behavior for the solution with initial condition corresponding to the unstable solution 4. (a) Evolution of $u(0.5, t)$ and $v(0.5, t)$ with respect to time. (b)  Evolution of total mass of $u$ and $v$ with respect to time.}\label{Res_1e-6_os}
\end{figure}

Similar to the case with $\epsilon = 10^{-4}$, we observe damped oscillation in the concentrations of $u$ and $v$ when the initial condition corresponds to unstable steady-state 4, 14, 16.  Fig. \ref{Res_os_1e-4}(a)-(b) show the evolution of $u(0.5, t)$ and $v(0.5, t)$, as well as the total mass of $u$ and $v$, respectively, for the solution with the initial condition corresponding to the unstable solution 4. Compared with $\epsilon = 1e-4$, the damping effect is smaller and more oscillations can be observed. All oscillating solutions will reach stable solution 3 after a long time.

Moreover, we observed two traveling-wave-like solutions when the initial condition corresponds to unstable solutions 7 and 8.
% For $\epsilon = 10^{-6}$, as we have mentioned, the convergence to the interior steady-state process is very slow. This gives us a chance to observe another phenomenon called 
The existence of traveling wave solutions is one of the most interesting phenomena in the classical Gray-Scott model \cite{manukian2015travelling, doelman1997pattern, kyrychko2009control, qi2017travelling}. These are solutions with the form $u(x, t) = U(z), v(x, t) = V(z)$ with $z = x - ct$ for some constant $c$, which is known as wave speed. As shown in Fig. \ref{Res_1e-6}, after some initial evolution, two solutions exhibit a traveling-wave-like behavior. Fig. \ref{TW} shows the profile of $u$ in one of the traveling wave-like solutions at $t = 200. 400, 600, 800$ and $1000$. Strictly speaking, it is not a traveling wave, as the profile of $u$ is also changed slightly. This might be due to the boundary effect. Fig. \ref{TW}(b) shows the location of $\min (u)$, which shows a travelling-wave-like behavior. It remains an open question whether real traveling wave solutions exist in the variational Gray-Scott model, which will be investigated in future work.

\begin{figure}[!h]
\centering
     \begin{overpic}[width = 0.45\linewidth]{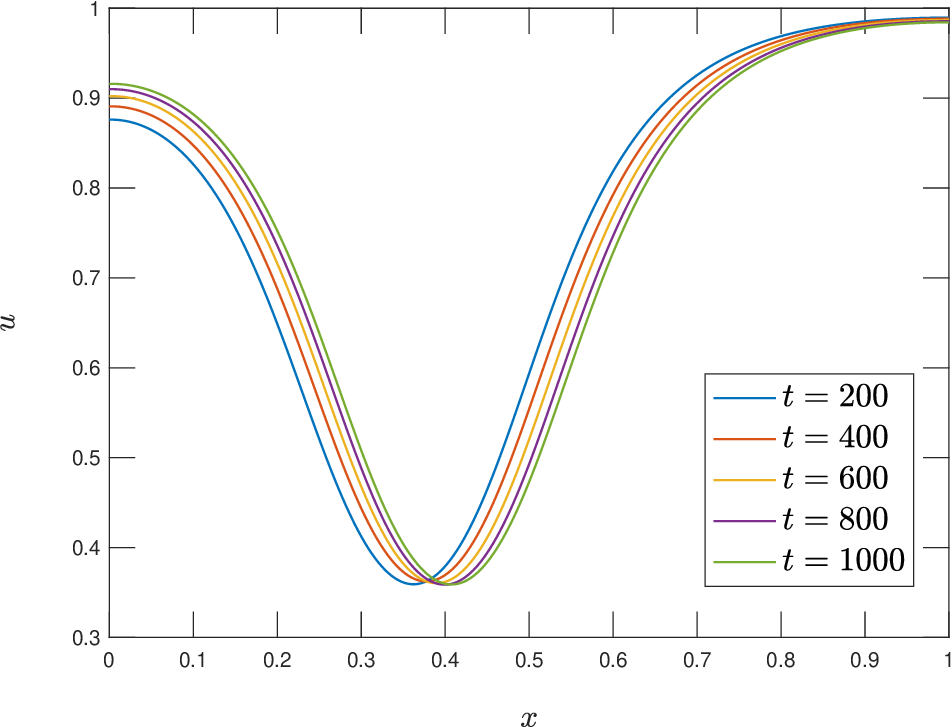}
     \put(-2, 75){(a)}
     \end{overpic}
   \hspace{3 em}
   \begin{overpic}[width = 0.43 \linewidth]{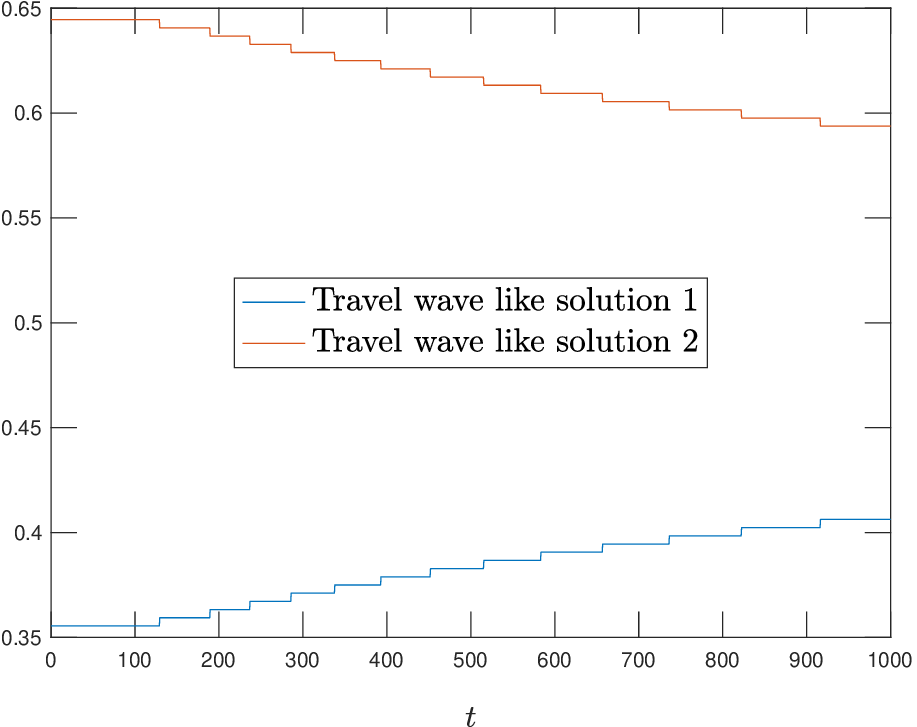}
     \put(-8, 76){(b)}
     \end{overpic}
      \caption{ (a). A traveling-wave-like solution for $\epsilon = 10^{-6}$, shown by profile of $u$ at $t = 200. 400, 600, 800$ and $1000$.  (b)Location of $\min (u)$ in two traveling-wave-like solutions. }\label{TW}
\end{figure}

The simulation result suggested that the variational Gray-Scott model can capture various pattern formation phenomena in the classical Gray-Scott model, including steady patterns, oscillating patterns, and traveling-wave-like patterns when $\epsilon$ is significantly small. These patterns appear in the variational model as transient states before the system reaches the uniform steady state.
It is worth mentioning that the classical two-species Gray-Scott model is a subsystem of the four-species variational Gray-Scott model. In this variational model, the concentration of species $Y$ is significantly larger than that of the other species, so the dynamics of the system are predominantly governed by the effective reaction $Y \ce{<=>} P$.
The initial condition of $U$ and $V$ will determine the dynamics either transform $Y$ to $P$ or $P$ to $Y$. In the current study, since the concentration of $P$ is taken as $1$, which is much smaller than that of $Y$, the system will converge to the boundary steady state quickly if the essential dynamics is $P$ to $Y$. We can observe the pattern formation for a significantly long time if the essential dynamics is $Y$ to $P$. We will investigate the effects of the initial concentration of $P$ in future work.

Furthermore, the simulation results are consistent with the energy stability analysis presented in previous sections. When $\epsilon$ is large, all initial conditions converge to the interior steady state $\vu_2$, which minimizes the free energy. However, when $\epsilon$ is sufficiently small, the gradient flow dynamics favor the boundary steady state $\vu_1$ for certain unstable initial configurations, due to the virtual stability of the boundary state in this regime. In this sense, both trivial steady states become effectively "stable." For some unstable steady states, the system tends to evolve toward the boundary steady state. Particularly, when the initial condition is close to the boundary steady state but far from the interior one—particularly due to the large value of $Y$—the system rapidly converges to the boundary state. In contrast, convergence to the interior steady state is much slower, leading to the persistence of non-uniform patterns.

\section{Pattern persistence time v.s. $\epsilon$ }
% Simulation results in section 3 show that for relatively large $\epsilon$, all initial conditions associated with the steady states of the classical Gray-Scott model will converge to the uniform interior steady state in the variational Gray-Scott model. However, for smaller $\epsilon$, some of these initial conditions will converge to the boundary steady state quickly, and some will converge to the interior steady state slowly. In the latter case, the non-uniform patterns can persist for a very long time. 
In this section, we study the pattern persistence time in the variational Gray-Scott model with respect to $\epsilon$. 

We consider two typical initial conditions, which correspond to the non-uniform linearly stable solution 6 (the last solution in Fig. \ref{Sol_in_GS}(a)) and the linearly unstable solution 3 (the third solution in Fig. \ref{Sol_in_GS}(b)). The first one will always converge to the interior steady state according to the numerical simulation. The second one will converge to the interior steady state for large $\epsilon$, but converge to the boundary steady state for small $\epsilon$. We define the pattern time as % \[\max\{\max u(t)-\min u(t),\max v(t) - \min v(t)\} = a\] 
\[ T  = \inf \{ t ~|~ \max\{\max_{x} u(x, t)-\min_{x} u(x, t),~ \max_{x} v(x, t) - \min_{x} v(x, t)\} \leq a \}  \]
with $a$ being significantly small since the vanishing of the pattern indicates that both $u$ and $v$ are close to constant functions. We pick $a = 0.05$ throughout this section.
Fig.~\ref{ep} shows the relationship between the pattern persistence time and \(\epsilon\) for these two initial conditions.

\begin{figure}[!h]
\centering
\includegraphics[width=45em]{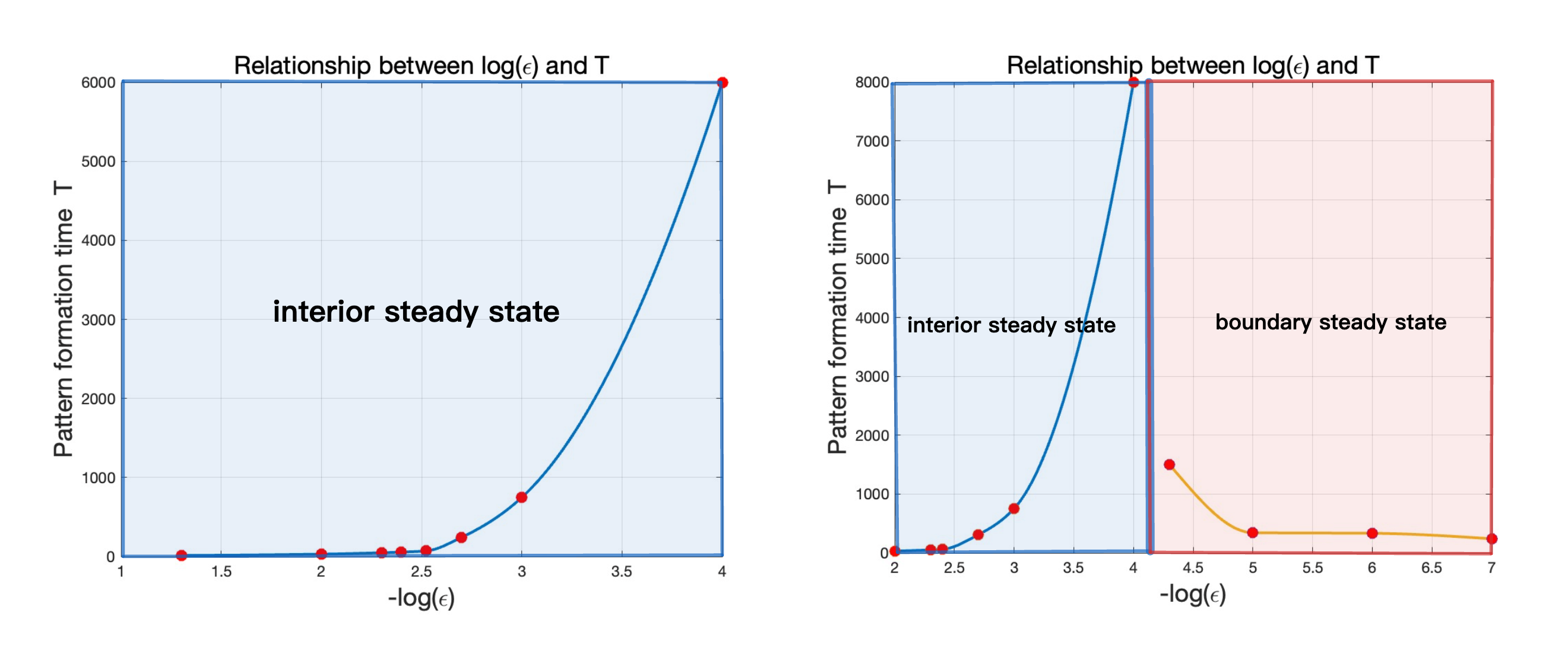}
\caption{The Pattern persistence time and \(\epsilon\) for a stable and an unstable initial condition, where pattern formation time is the first time when \(\max\{\max_x u(x, t) - \min_x u(x, t), \max_x v(x, t) - \min_x v(x, t)\} \leq 0.05\).}
\label{ep}
\end{figure}

For the linearly stable solution 6, the pattern persistence time increases as $\epsilon$ decreases, and is of the order $\fO(\epsilon^{-1})$. For the linearly unstable solution 3, it converges to the interior state for relatively large $\epsilon$, and the pattern persistence time also increases with decreasing $\epsilon$, following the order $\fO(\epsilon^{-1})$. However, the initial condition converges to the boundary steady state for large $\epsilon$, and the initial pattern will be destroyed faster for smaller $\epsilon$. The result is consistent with the previous simulation and analysis. 

We plot the evolution of free energy for two initial conditions with different values of $\epsilon$.
For $\epsilon = 10^{-2}$, the free energy plot clearly shows that both initial conditions converge to equilibrium around $t=400$. The evolution of free energy for $\epsilon = 10^{-4}$ follows a similar pattern, but it takes significantly longer for both solutions to reach the trivial steady state, as shown in Fig. (\ref{ep}). However, for $\epsilon = 10^{-6}$, the linearly unstable steady state in the classical Gray-Scott model quickly converges to the boundary steady state, although the free energy of this state remains relatively large. For the linearly stable state in the classical Gray-Scott model, while the $(u,v)$-components of the solution remain unchanged, as shown in Fig. \ref{Res_1e-6}, the system's free energy continuously decreases over time. The free energy plot also suggests that the linearly stable stationary patterns in the classical Gray-Scott model serve as transition states in the variational model.
\begin{figure}[!h]
    \centering
    \includegraphics[width=0.32\linewidth]{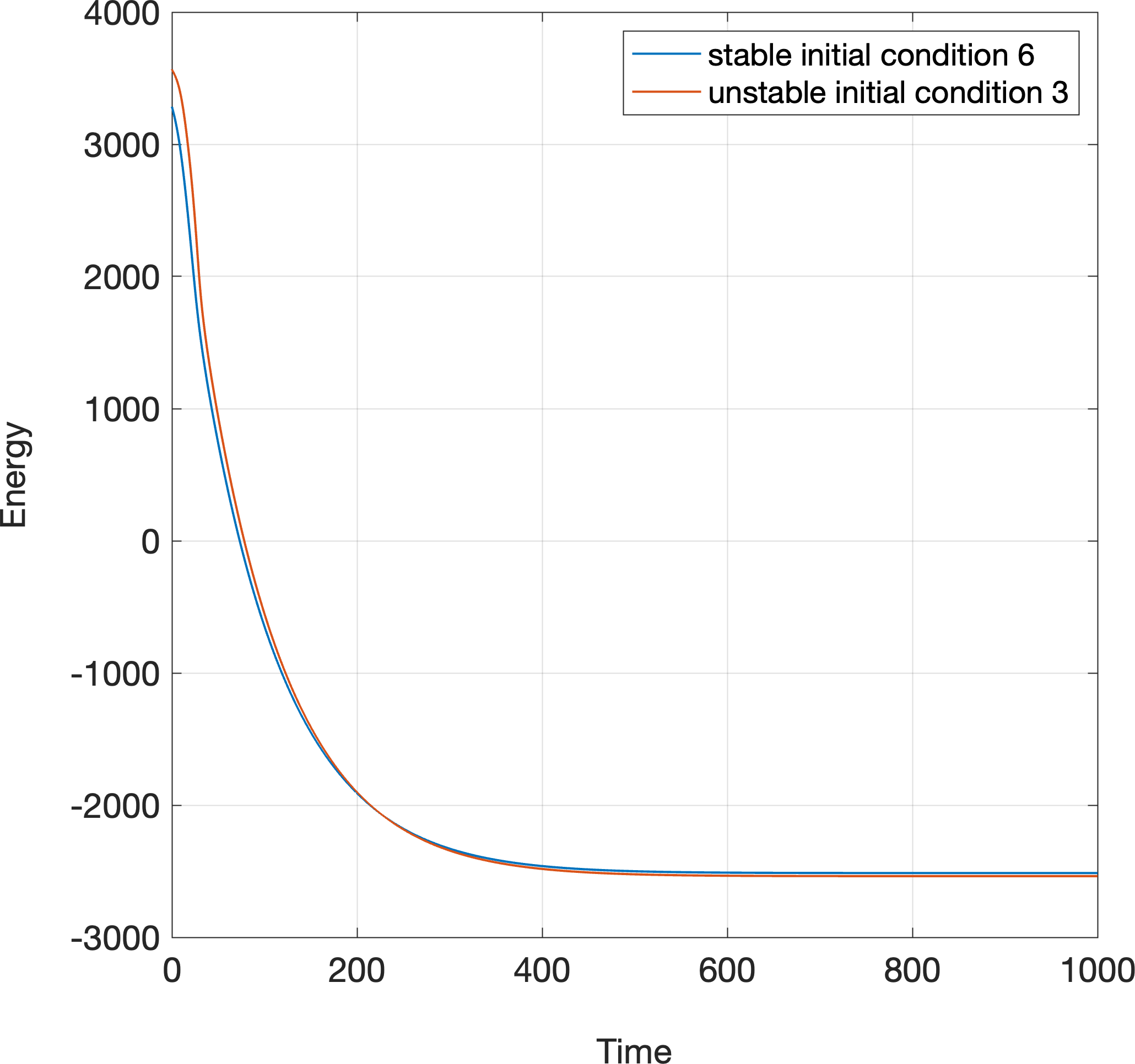}
    \hfill
    \includegraphics[width=0.32\linewidth]{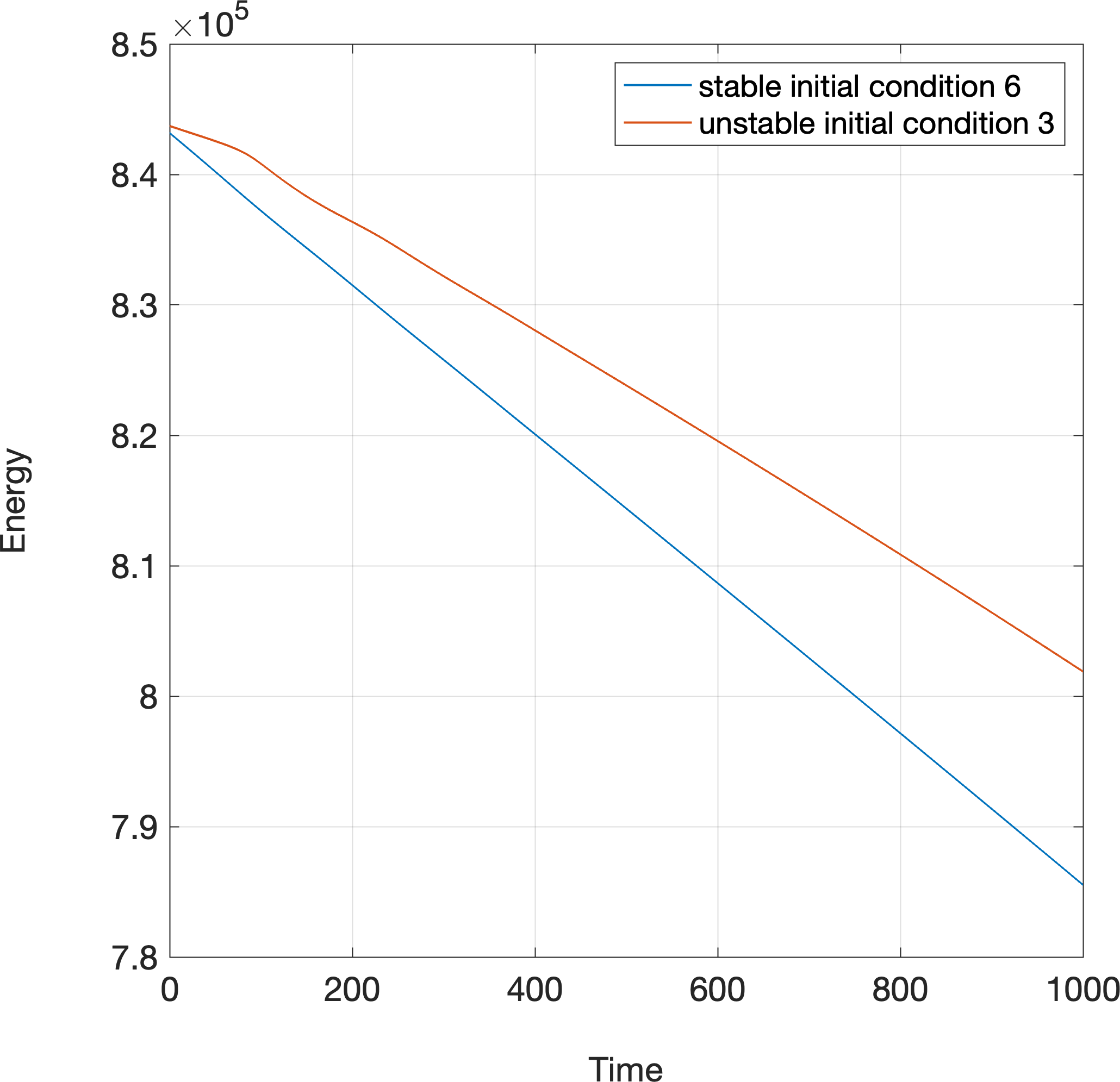}
    \hfill
    \includegraphics[width=0.32\linewidth]{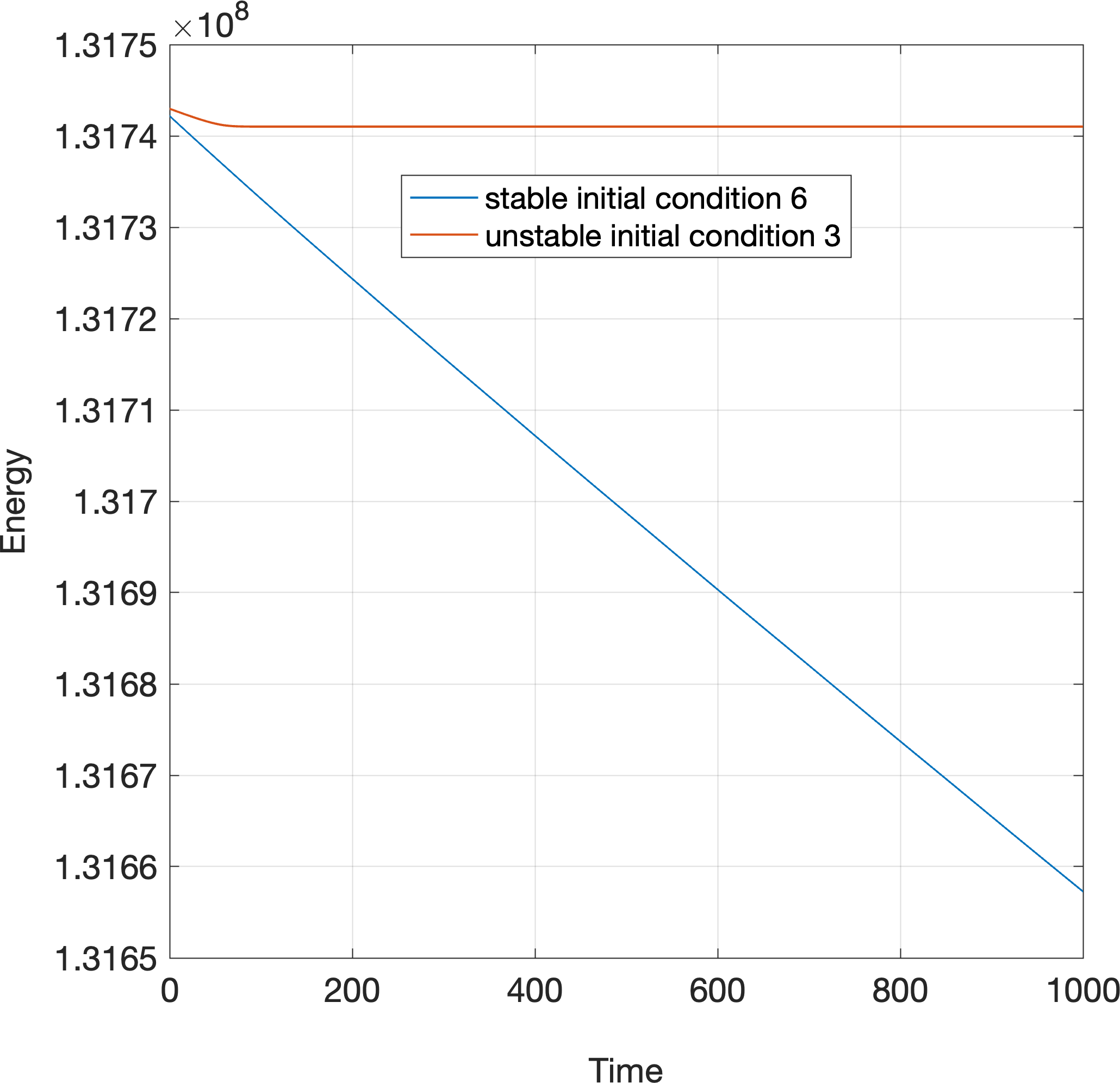}
    \caption{Evolution of free energy of two initial condition (stable 3 and unstable 6) for different value of $\epsilon$ (from left to right: $\epsilon = 10^{-2}, 10^{-4}, 10^{-6}$).}
    \label{fig:enter-label}
\end{figure}

%In summary, for pattern formation to occur in the variational Gray-Scott model, the system must start from a particular initial condition that leads to convergence to the interior steady state, where all  $Y$ eventually convert to $P$. The pattern is sustained as long as the concentration of $Y$ remains sufficiently large, analogous to the continuous feed of $U$ in the classical Gray-Scott model. This conclusion aligns with findings from a recent study \cite{zhang2023free} on a slightly different thermodynamically consistent three-species reaction-diffusion model. In that work, the authors demonstrate that in a finite system, a specific Turing pattern emerges only within a finite range of the total molecule number, and the presence of a third species stabilizes the Turing pattern of the two-species system. 

To summarize, in order to have pattern formation in the variational Gray-Scott model, one needs to have some particular initial condition such that the variational model converges to the interior steady state, in which all $Y$ will convert to $P$. The pattern is maintained if the concentration of $Y$ stays large, analogous to the continuous feed of $U$ in the classical Gray-Scott model. The conclusion is similar to that in a recent paper \cite{zhang2023free} on a slightly different thermodynamically consistent three-species reaction-diffusion model, in which the authors show that for a finite system, a specific Turing pattern exists only within a finite range of total molecule number, and the presence of the third species stabilizes the Turing pattern of the two species.

\section{Conclusions}
In this paper, we study the pattern formation of a thermodynamically consistent variational Gray-Scott model, derived by an energetic variational approach, in one dimension, using non-uniform steady states of the classical model computed in \cite{hao2020spatial} as initial conditions. The variational Gray-Scott model includes a virtual term \( Y \) and reversible reactions to the classical Gray-Scott model, transforming the system into a thermodynamically consistent closed system. The classical Gray-Scott model can be viewed as a subsystem of the variational Gray-Scott model when the reverse reaction rate \(\epsilon\) tends to zero. By decreasing $\epsilon$, we observed that the stationary pattern in the classical Gray-Scott model can appear as the transient state in the variational model when $\epsilon$ is significantly small. Additionally, the variational model admits oscillating and traveling-wave-like solutions for small $\epsilon$. The results show the capability of the variational model in capturing pattern formation.

We also analyze the energy stability of two uniform steady states, an interior steady state and a boundary steady state, in the variational Gray-Scott model. Although the interior steady state is always stable, the stability region becomes significantly smaller as \(\epsilon\) decreases. In the meantime, the boundary steady state is virtually stable, i.e., is stable with respect to the third reaction $U \ce{<=>} Y$. For certain initial conditions, since the concentration of $Y$ (at order $\fO(\epsilon^{-1})$) is much larger than other species (at the order of $\fO(1)$) for small $\epsilon$, the gradient flow dynamics will drive the system to the boundary steady state. In order to observe pattern formation, one needs a special initial condition such that the dynamics will not converge to the boundary steady state.

The variational Gray-Scott model offers a new mathematical framework for understanding pattern formation from a thermodynamic perspective. The numerical simulation and theoretical analysis suggest that pattern formation is maintained by the presence of species $Y$, i.e., the continuous input of $U$ in the classical Gray-Scott model. The initial condition determines the effective direction of the reaction network, and the pattern formation can only occur if the network continuously generates the inert product $P$. Furthermore, for sufficiently small $\epsilon$, the system's free energy decreases over time due to the conversion of $Y$ to $P$, indicating that these patterns act as transition states in a larger, closed system. This also implies that maintaining the pattern requires a continuous energy input from the environment.

The current study represents a first step toward understanding pattern formation in biological systems from an energetic perspective. Several open questions remain for the variational Gray-Scott model, including: 1) Investigating the effect of varying the scale of small parameter $\epsilon_i$ in the reaction network (\ref{KCR_GS_Re}); 2) Analyzing the role of $P$ in pattern formation; 3) Examining more general initial conditions. We will study these open questions in future work.

\section*{Acknowledgement}

CL was partially supported by NSF DMS-2153029, DMS-2118181 and DMS-2410742.  YW was
supported by NSF DMS-2153029 and DMS-2410740. YY and WH are supported by NIH via 1R35GM146894.

\bibliographystyle{elsarticle-num}
\bibliography{references}

\end{document}